\newtheorem{theorem}{Theorem}
\author{Xuefeng Shen}
\author{Melvin Leok}
\title{Geometric Exponential Integrators}
\begin{document}
\maketitle

\begin{abstract}
In this paper, we consider exponential integrators for semilinear Poisson systems. Two types of exponential integrators are constructed, one preserves the Poisson structure, and the other preserves energy. Numerical experiments for semilinear Possion systems obtained by semi-discretizing Hamiltonian PDEs are presented. These geometric exponential integrators exhibit better long time stability properties as compared to non-geometric integrators, and are computationally more efficient than traditional symplectic integrators and energy-preserving methods based on the discrete gradient method.
\end{abstract}

\section{Introduction}
Exponential integrators~\cite{HoOs2010} are a class of numerical integrators for stiff systems whose vector field can be decomposed into a linear term and a nonlinear term,
\begin{equation}\label{semilinear}
\dot{q} = Aq + f(q).
\end{equation}
Usually, the coefficient matrix $A$ has a large spectral radius, and is responsible for the stiffness of the system of differential equations, while the  nonlinear term $f(q)$ is relatively smooth. There are various ways to construct an exponential integrator~\cite{Mi2004}. For example, we can perform a change of variables $\tilde{q}(t) = e^{-At}q(t)$, and transform \eqref{semilinear} to obtain
\begin{equation}\label{ex}
  [e^{-At}q(t)]' = \tilde{q}^{'}(t) = e^{-At}f(e^{At}\tilde{q}(t)).
\end{equation}
Notice that the Jacobi matrix of \eqref{ex} equals $e^{-At}\nabla f e^{At}$, which has a smaller spectral radius than the Jacobi matrix $A + \nabla f$ of~\eqref{semilinear}.
A natural idea is to apply a classical integrator for the mollified system \eqref{ex} to obtain an approximation of $\tilde{q}(t)$, then invert the change of variables to obtain an approximation of the solution $q(t)$ of \eqref{semilinear}. In Section~\ref{symplecticex}, we shall demonstrate how to construct symplectic exponential integrators using this approach.

Another way of constructing exponential integrators starts from the variation-of-constants formula,
\begin{equation}\label{variationofconstant}
  q(t) = e^{A(t-t_0)}q(t_0) + \int_{t_0}^{t}e^{A(t-\tau)}f(q(\tau))d\tau,
\end{equation}
which is the exact solution for \eqref{semilinear} with initial condition $q(t_0) = q_0$. Then, a computable approximation can be obtained by approximating the $f(q(\tau))$ term inside the integral. If we approximate $f(q(\tau))$ by $f(q_k)$, we arrive at the exponential Euler method,
\begin{equation}\label{exeuler}
  q_{k+1} = e^{Ah}q_k + \int_{0}^{h}e^{A\tau}d\tau \cdot f(q_k).
\end{equation}
An exponential Runge--Kutta method of collocation type~\cite{HoOS2005} could also be constructed by approximating $f(q(\tau))$ with polynomials. In Section~\ref{senergyex}, we shall show how to construct energy-preserving exponential integrators from \eqref{variationofconstant}.

In this paper, we consider a specific form of \eqref{semilinear} which is a Poisson system. We assume $A = JD$, $f(q) = J\nabla V(q)$, where $J^\mathrm{T} = -J, D^\mathrm{T} = D$,
and $JD = DJ$, thus the coefficient matrix $A$ is also skew-symmetric. Now, the semilinear system \eqref{semilinear} can be written as,
\begin{equation}\label{poisson}
\dot{q} = J(Dq + \nabla V(q)) = J\nabla H(q),
\end{equation}
with Hamiltonian function $H(q) = \frac{1}{2}q^\mathrm{T}Dq + V(q)$. Equation \eqref{poisson} describes a constant Poisson system, and there are at least two quantities that are preserved by the flow:
the Poisson structure $J_{ij}\frac{\partial}{\partial x_i}\otimes \frac{\partial}{\partial x_j}$ and Hamiltonian $H(q)$. Geometric integrators that preserve the geometric structure and first integrals of the system typically exhibit superior qualitative properties when compared to non-geometric integrators, and they are an active area of research~\cite{HaLuWa2006, MaWe2001, LeRe2004}.

Here, we construct geometric exponential integrators that either preserve the Poisson structure or Hamiltonian of \eqref{poisson}. They exhibit long time stability, allow for relatively larger timesteps for the stiff problem, and are computationally more efficient as they can be implemented using fixed point iterations as opposed to Newton type iterations. For the rest of the paper, Section~\ref{symplecticex} is devoted to developing symplectic exponential integrators that preserve the Poisson structure; Section~\ref{senergyex} is devoted to developing energy preserving exponential integrators; numerical methods and experiments are presented in Section~\ref{numerical_methods} and Section~\ref{numerical_experiments}, respectively.

\section{Symplectic Exponential Integrator}\label{symplecticex}
For constant Poisson systems \eqref{poisson}, it was shown in~\cite{ZhQi1994} that the midpoint rule and diagonally implicit symplectic Runge--Kutta methods preserve the Poisson structure $J_{ij}\frac{\partial}{\partial x_i}\otimes \frac{\partial}{\partial x_j}$. We first start by constructing an exponential midpoint rule: apply the classical midpoint rule to the transformed system \eqref{ex} to obtain
\begin{equation}\label{eq1}
\frac{\tilde{q}_{k+1}-\tilde{q}_{k}}{h} = e^{-At_{k+1/2}}f\Big(e^{At_{k+1/2}}\frac{\tilde{q}_{k+1}+\tilde{q}_{k}}{2}\Big),
\end{equation}
where
\[ t_{k+1/2} = \frac{t_k+t_{k+1}}{2},\quad  h = t_{k+1}-t_k,\quad  \tilde{q}_{k} = e^{-At_k}q_k,\quad  \tilde{q}_{k+1} = e^{-At_{k+1}}q_{k+1}.
\]
Transform \eqref{eq1} back to $q_k$ and $q_{k+1}$, and we obtain the exponential midpoint rule,
\begin{equation}\label{exmid}
q_{k+1} = e^{Ah}q_k + h\cdot e^{A\frac{h}{2}}f\Big(\frac{e^{A\frac{h}{2}}q_k + e^{-A\frac{h}{2}}q_{k+1}}{2} \Big).
\end{equation}

\begin{theorem}\label{exmidtheorem}
 The exponential midpoint rule \eqref{exmid} preserves the Poisson structure when applied to the semilinear Poisson system \eqref{poisson}.
\end{theorem}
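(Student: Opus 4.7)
My plan is to reduce the statement to the known Poisson-preservation of the classical midpoint rule by working in the rotated coordinates $\tilde q=e^{-At}q$, and then transport the conclusion back to the original variables. The linear-algebraic ingredient I need throughout is that $A=JD$ is both skew-symmetric and commutes with $J$: skew-symmetry follows from $A^{\mathrm T}=D^{\mathrm T}J^{\mathrm T}=-DJ=-A$, so $e^{At}$ is orthogonal with inverse $e^{-At}$; commutativity follows from $AJ=JDJ=J(DJ)=J(JD)=J^{2}D=JA$ using the hypothesis $JD=DJ$. Hence $e^{At}J=Je^{At}$ for every $t$.

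With these in hand, the transformed vector field satisfies
\[
e^{-At}f(e^{At}\tilde q)=e^{-At}J\nabla V(e^{At}\tilde q)=J\,e^{A^{\mathrm T}t}\nabla V(e^{At}\tilde q)=J\nabla_{\tilde q}\tilde H(\tilde q,t),
\]
where $\tilde H(\tilde q,t):=V(e^{At}\tilde q)$. Thus \eqref{ex} is a (time-dependent) Poisson system with the same constant structure matrix $J$, and \eqref{eq1} is precisely the classical midpoint rule applied to the autonomous Poisson system obtained by freezing $t=t_{k+1/2}$ in $\tilde H$. The cited result of~\cite{ZhQi1994} then tells me that the Jacobian $M=\partial\tilde q_{k+1}/\partial\tilde q_k$ satisfies $MJM^{\mathrm T}=J$. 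If an independent verification is preferred, differentiating \eqref{eq1} yields $M=(I-\tfrac{h}{2}JS)^{-1}(I+\tfrac{h}{2}JS)$ with $S=e^{A^{\mathrm T}t_{k+1/2}}\nabla^{2}V\,e^{At_{k+1/2}}$ symmetric, and the identity $MJM^{\mathrm T}=J$ falls out of the standard Cayley-transform calculation using $JS^{\mathrm T}=-SJ$ upstairs-downstairs.

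Finally I transport back: since $q_{j}=e^{At_{j}}\tilde q_{j}$ for $j=k,k+1$, the full one-step Jacobian is $P=e^{At_{k+1}}M\,e^{-At_{k}}$, and using the commutativity of $e^{At}$ with $J$ together with $(e^{At})^{\mathrm T}=e^{-At}$,
\[
PJP^{\mathrm T}=e^{At_{k+1}}M\,e^{-At_{k}}J\,e^{At_{k}}M^{\mathrm T}e^{-At_{k+1}}=e^{At_{k+1}}MJM^{\mathrm T}e^{-At_{k+1}}=e^{At_{k+1}}J\,e^{-At_{k+1}}=J,
\]
which is the desired Poisson-structure preservation. The main obstacle, as I read the hypotheses, is exactly the interplay $JD=DJ$: without it the rotated system would not carry the same constant Poisson structure in the $\tilde q$ variables, and the reduction to the midpoint-rule fact of~\cite{ZhQi1994} would break down. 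Once that commutativity is in place, everything else is bookkeeping around the orthogonality of $e^{At}$.
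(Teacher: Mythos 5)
Your proof is correct, but it is organized differently from the one in the paper. The paper differentiates \eqref{exmid} directly in the original variables and verifies $(\nabla\phi)J(\nabla\phi)^{\mathrm T}=J$ by the explicit computation $MJM^{\mathrm T}=NJN^{\mathrm T}$ in \eqref{eq3}, using exactly the three facts you isolate (skew-symmetry of $A$, symmetry of $\nabla^{2}V$, and $e^{At}J=Je^{At}$ from $JD=DJ$). You instead factor the argument: first you show the change of variables $\tilde q=e^{-At}q$ is a linear Poisson map carrying \eqref{poisson} to the mollified system \eqref{ex} with the \emph{same} constant structure matrix $J$ and Hamiltonian $\tilde H(\tilde q,t)=V(e^{At}\tilde q)$; then you invoke (or re-derive via the Cayley transform) the fact that the classical midpoint rule \eqref{eq1} preserves $J$; finally you conjugate the Jacobian back. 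Your route buys more: it makes transparent exactly where the hypothesis $JD=DJ$ enters (it is what makes the rotation a Poisson map), and it generalizes immediately to any Poisson-structure-preserving Runge--Kutta method applied to \eqref{ex} --- in particular it would subsume the paper's subsequent DISEX composition result with no extra computation. The paper's direct verification buys self-containedness without appeal to the time-dependent-Hamiltonian framing; your inclusion of the Cayley-transform check makes your version self-contained as well. One cosmetic nit: the identity you quote as $JS^{\mathrm T}=-SJ$ should read $(JS)^{\mathrm T}=-SJ$ for symmetric $S$, which is clearly what you use.
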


\begin{proof}
Recall that a map $\phi$ preserves Poisson structure $J_{ij}\frac{\partial}{\partial x_i}\otimes \frac{\partial}{\partial x_j}$ iff
\begin{equation}\label{eq2}
(\nabla \phi)J(\nabla \phi)^\mathrm{T} = J.
\end{equation}
Differentiating  \eqref{exmid}, we obtain,
\begin{gather*}
dq_{k+1} = e^{Ah}dq_k + h\cdot e^{A\frac{h}{2}}\nabla f \Big(\frac{1}{2}e^{A\frac{h}{2}}dq_k + \frac{1}{2}e^{-A\frac{h}{2}}dq_{k+1}\Big),\\
\Big(I - \frac{h}{2}\cdot e^{A\frac{h}{2}}\nabla f e^{-A\frac{h}{2}}\Big)dq_{k+1} = \Big(e^{Ah} + \frac{h}{2}\cdot e^{A\frac{h}{2}}\nabla f e^{A\frac{h}{2}}\Big)dq_k.
\end{gather*}
So the map $\phi(q_k)=q_{k+1}$ has Jacobi matrix $\nabla \phi = M^{-1}N$, where
\begin{align*}
	M &=  I - \frac{h}{2}\cdot e^{A\frac{h}{2}}\nabla f e^{-A\frac{h}{2}} =  I - \frac{h}{2}\cdot e^{A\frac{h}{2}}J \nabla^2 V e^{-A\frac{h}{2}},\\
	N &=  e^{Ah} + \frac{h}{2}\cdot e^{A\frac{h}{2}}\nabla f e^{A\frac{h}{2}} = e^{Ah} + \frac{h}{2}\cdot e^{A\frac{h}{2}}J\nabla^2 V e^{A\frac{h}{2}}.
\end{align*}
Then, we just need to verify \eqref{eq2}, which is equivalent to $MJM^\mathrm{T} = NJN^\mathrm{T}$,
\begin{equation}\label{eq3}
\begin{aligned}
        MJM^\mathrm{T} & = \Big(I - \frac{h}{2}\cdot e^{A\frac{h}{2}}J \nabla^2 V e^{-A\frac{h}{2}}\Big)J\Big(I + \frac{h}{2}\cdot e^{A\frac{h}{2}}\nabla^2 V Je^{-A\frac{h}{2}}\Big)\\
                       & = J - \frac{h^2}{4}e^{A\frac{h}{2}}J\nabla^2 V e^{-A\frac{h}{2}}Je^{A\frac{h}{2}}\nabla^2V Je^{-A\frac{h}{2}}\\
                       & = J - \frac{h^2}{4}e^{A\frac{h}{2}}J\nabla^2 V J\nabla^2V Je^{-A\frac{h}{2}}\\
                       & = NJN^\mathrm{T}.
\end{aligned}
\end{equation}
In \eqref{eq3}, we used the property that $\nabla^2V$ is symmetric, that $A$ is skew-symmetric which implies that $(e^{Ah})^\mathrm{T} = e^{-Ah}$, and the assumptions that $JD = DJ$ and that $e^{A\frac{h}{2}}$ and $J$ commute.
\end{proof}

The exponential midpoint rule is a second-order method, and we will now develop higher-order symplectic exponential integrators. Recall that a general diagonally implicit Symplectic Runge--Kutta method (DISRK) has a Butcher tableau of the following form,
\begin{table}[!hbp]
\centering
\caption{DISRK\label{tDISRK}}
\begin{tabular}{c|ccccc}
$c_1$        &  $\frac{b_1}{2}$      &   0                &          0            &      0         &        0                           \\
$c_2$        &  $b_1$                &  $\frac{b_2}{2}$   &             0         &        0       &         0                         \\
$c_3$        &  $b_1$                &  $b_2$             &    $\frac{b_3}{2}$    &        0       &           0                       \\
$\vdots$     & $\vdots$              &  $\vdots$          &    $\vdots$           &  $\ddots$      &           $\vdots$                     \\
$c_s$        & $b_1$                 &  $b_2$             &     $b_3$             &  $\cdots$      &    $\frac{b_s}{2}$              \\
\hline
             & $b_1$                 &  $b_2$             &     $b_3$             &  $\cdots$      &     $b_s$                         \\
\end{tabular}
\end{table}

It was shown in \cite{ZhQi1994} that any DISRK method can be regarded as the composition of midpoint rules with timesteps $b_1h, b_2h, b_3h, \cdots b_sh$. If we apply the DISRK method for the transformed system \eqref{ex}, and then convert back, we obtain the following diagonally implicit symplectic exponential (DISEX) integrator,
\begin{equation}\label{DIEX}
\left\{
\begin{aligned}
Q_i     &= e^{Ahc_i}q_k + h\cdot \displaystyle\sum_{j=1}^{i}a_{ij}e^{Ah(c_i-c_j)}f(Q_j),\\
q_{k+1} &= e^{Ah}q_k + h\cdot \displaystyle\sum_{i=1}^{s}b_i e^{Ah(1-c_i)}f(Q_i).
\end{aligned}\right.
\end{equation}
where $a_{ij}$ are the coefficients in Table~\ref{tDISRK}. This integrator can be represented in terms of the following Butcher tableau,
\begin{table}[!hbp]
\centering
\caption{DISEX\label{tDISEX}}
\begin{tabular}{c|ccccc}
$e^{Ahc_1}$        &  $\frac{b_1}{2}$                &   0                        &          0                 &      0         &        0                           \\
$e^{Ahc_2}$        &  $b_1e^{Ah(c_2-c_1)}$           &  $\frac{b_2}{2}$           &             0              &        0       &         0                         \\
$e^{Ahc_3}$        &  $b_1e^{Ah(c_3-c_1)}$           &  $b_2e^{Ah(c_3-c_2)}$      &    $\frac{b_3}{2}$         &        0       &           0                       \\
$\vdots$           & $\vdots$                        &  $\vdots$                  &    $\vdots$                &  $\ddots$      &           $\vdots$                     \\
$e^{Ahc_s}$        & $b_1e^{Ah(c_s-c_1)}$            &  $b_2e^{Ah(c_s-c_2)}$      &   $b_3e^{Ah(c_s-c_3)}$     &  $\cdots$      &    $\frac{b_s}{2}$              \\
\hline
 $e^{Ah}$          & $b_1e^{Ah(1-c_1)}$              &  $b_2e^{Ah(1-c_2)}$        &     $b_3e^{Ah(1-c_3)}$     &  $\cdots$      &     $b_se^{Ah(1-c_s)}$                         \\
\end{tabular}
\end{table}

The DISEX method preserves the Poisson structure, as demonstrated by the following theorem.
\begin{theorem}
The DISEX method is equivalent to the composition of exponential midpoint rules with timesteps $b_1h, b_2h, b_3h, \cdots b_sh$.
\end{theorem}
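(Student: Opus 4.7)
The plan is to leverage the construction of DISEX, namely that it is exactly DISRK applied to the mollified system \eqref{ex}, together with the result from~\cite{ZhQi1994} already invoked in the paper, which states that any DISRK scheme equals a composition of classical midpoint rules with substeps $b_1h, b_2h, \ldots, b_sh$. Applying that decomposition to the mollified system \eqref{ex} gives DISEX as a composition of midpoint rules for \eqref{ex}. It therefore suffices to show that \emph{one} step of the classical midpoint rule applied to the mollified system~\eqref{ex}, on an arbitrary subinterval $[\tau,\tau+s]$, corresponds via the change of variables $\tilde q(t) = e^{-At}q(t)$ to \emph{one} step of the exponential midpoint rule \eqref{exmid} applied to the original system \eqref{poisson} with timestep~$s$, independent of the starting time $\tau$.

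Concretely, I would set up the midpoint step for \eqref{ex} on $[\tau,\tau+s]$,
\begin{equation*}
\tilde q_{\tau+s} = \tilde q_{\tau} + s\, e^{-A(\tau+s/2)} f\!\left(e^{A(\tau+s/2)}\tfrac{\tilde q_{\tau+s}+\tilde q_{\tau}}{2}\right),
\end{equation*}
left-multiply by $e^{A(\tau+s)}$, and substitute $q_\tau = e^{A\tau}\tilde q_\tau$, $q_{\tau+s}=e^{A(\tau+s)}\tilde q_{\tau+s}$. The factors $e^{A\tau}$ will cancel cleanly because $e^{At}$ commutes with itself and with $A$; what remains is exactly the exponential midpoint rule \eqref{exmid} with $h$ replaced by $s$. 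This is the same derivation as the one used to pass from \eqref{eq1} to \eqref{exmid}, just carried out on a shifted interval, and it shows that the transformation intertwines each midpoint substep of the mollified system with the corresponding exponential midpoint substep of \eqref{poisson}.

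With this correspondence in hand, chain the substeps: start from $q_k$, apply successive exponential midpoint steps with timesteps $b_1h,\ldots,b_sh$, and use the intertwining at each stage. The resulting internal stages and the final update coincide, after comparing to the Butcher tableau in Table~\ref{tDISEX}, with the DISEX stages $Q_i$ and the update $q_{k+1}$ in \eqref{DIEX}. One can either identify the composite stages term by term with the $Q_i$, or, more cleanly, observe that since the midpoint composition produces DISRK applied to \eqref{ex} and DISRK applied to \eqref{ex} converted back is by definition DISEX, the two schemes must agree.

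The routine but slightly delicate step is the verification that the midpoint substep on the non-autonomous system \eqref{ex} becomes autonomous (independent of $\tau$) in the $q$-coordinate; this is where the commutativity of $e^{At}$ with itself and the identity $e^{A\tau}e^{As}=e^{A(\tau+s)}$ get used, and it is really the only substantive computation. Once that is established, the theorem is an immediate corollary of the DISRK composition result of~\cite{ZhQi1994}, and no separate symplecticity argument is needed—preservation of the Poisson structure then follows as a corollary from Theorem~\ref{exmidtheorem}.
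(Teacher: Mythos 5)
Your proof is correct, but it takes a genuinely different route from the one in the paper. The paper never actually invokes the cited composition result of~\cite{ZhQi1994} inside the proof: it writes out the chain of exponential midpoint substeps \eqref{M.1}--\eqref{M.s} directly in the $q$-coordinates, introduces the stage variables $Q_i$ as the arguments of $f$, and verifies by an explicit induction that the resulting equations reproduce, row by row, the Butcher tableau of Table~\ref{tDISEX}. You instead work at the level of the mollified system \eqref{ex}: you take the DISRK-equals-composition-of-midpoint-rules statement as given, and show that the change of variables $\tilde q(t)=e^{-At}q(t)$ intertwines a classical midpoint substep on $[\tau,\tau+s]$ for \eqref{ex} with an exponential midpoint step of size $s$ for \eqref{poisson} (your single-substep computation is exactly right, and the $\tau$-independence is the crux), while the same change of variables turns DISRK on \eqref{ex} into DISEX; equality of the two compositions then transports across the conjugation. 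Your version is shorter and more conceptual, and it makes clear that the theorem is really a change-of-variables corollary of a known fact. What the paper's direct computation buys in exchange is self-containedness: it does not need the composition result to hold for the \emph{non-autonomous} system \eqref{ex}, which in your argument requires the consistency relations $c_i=b_1+\cdots+b_{i-1}+\tfrac{b_i}{2}$ (so that the DISRK abscissae land on the midpoints of the subintervals) --- a point you should state explicitly, though it is harmless since the paper lists exactly these relations. You should also spell out, rather than take as definitional, the short verification that converting DISRK on \eqref{ex} back to the $q$-variables yields precisely \eqref{DIEX}; it is one line (multiply the $i$-th stage equation by $e^{A(t_k+c_ih)}$ and the update by $e^{At_{k+1}}$), but without it your ``by definition'' step is circularly close to what is being proved. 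With those two sentences added, your argument is complete, and your closing observation that Poisson-structure preservation then follows from Theorem~\ref{exmidtheorem} matches the paper's intent.
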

\begin{proof}
The composition of exponential midpoint rules $$q_k \xrightarrow{b_1h} Z_1 \xrightarrow{b_2h} Z_2 \cdots \xrightarrow{b_sh} Z_s = q_{k+1}$$
is represented as follows,
\begin{align}
Z_1 &= e^{Ahb_1}q_k + b_1h\cdot e^{Ah\frac{b_1}{2}}f\Big(\frac{e^{Ah\frac{b_1}{2}}q_k + e^{-Ah\frac{b_1}{2}}Z_1}{2}\Big), \label{M.1}\tag{M.1}\\
Z_2 &= e^{Ahb_2}Z_1+ b_2h\cdot e^{Ah\frac{b_2}{2}}f\Big(\frac{e^{Ah\frac{b_2}{2}}Z_1 + e^{-Ah\frac{b_2}{2}}Z_2}{2}\Big), \label{M.2}\tag{M.2}\\
&\qquad\qquad\qquad\qquad\vdots \notag\\
Z_i &= e^{Ahb_i}Z_{i-1} + b_ih\cdot e^{Ah\frac{b_i}{2}}f\Big(\frac{e^{Ah\frac{b_i}{2}}Z_{i-1} + e^{-Ah\frac{b_i}{2}}Z_i}{2}\Big), \label{M.i}\tag{M.i}\\
&\qquad\qquad\qquad\qquad\vdots \notag\\
Z_s &= e^{Ahb_s}Z_{s-1} + b_sh\cdot e^{Ah\frac{b_s}{2}}f\Big(\frac{e^{Ah\frac{b_s}{2}}Z_{s-1} + e^{-Ah\frac{b_s}{2}}Z_s}{2}\Big). \label{M.s}\tag{M.s}
\end{align}
Introduce $Q_1 = \frac{e^{Ah\frac{b_1}{2}}q_k + e^{-Ah\frac{b_1}{2}}Z_1}{2}$ in~\eqref{M.1} as an intermediate variable. Then, on both sides of~\eqref{M.1}, multiply by
$e^{-Ah\frac{b_1}{2}}$, add $e^{Ah\frac{b_1}{2}}q_k$, and divide by two, which yields an equivalent form of~\eqref{M.1},
\begin{equation}
Q_1 = e^{Ah\frac{b_1}{2}}q_k + \frac{b_1}{2}h\cdot f(Q_1). \label{S.1}\tag{S.1}
\end{equation}
For the Runge--Kutta method represented by the Butcher tableau in Table~\ref{tDISRK} to be consistent, the coefficients have to satisfy,
\begin{displaymath}
\left\{
\begin{aligned}
c_1 &=  \frac{b_1}{2}, \\
c_2 &=  b_1 + \frac{b_2}{2}, \\
    & \hspace{0.3em}\vdots \\
c_i &=  b_1 + b_2 +\cdots b_{i-1} + \frac{b_i}{2}, \\
    & \hspace{0.3em}\vdots \\
c_s &=  b_1 + b_2 +b_3 +\cdots +b_{s-1} + \frac{b_s}{2}, \\
1   &=  b_1 + b_2 +b_3 +\cdots +b_{s-1} + b_s.
\end{aligned} \right.
\end{displaymath}
So equation~\eqref{S.2} coincides with the first line of the Butcher tableau of the DISEX method. Similarly, introduce $Q_2 = \frac{e^{Ah\frac{b_2}{2}}Z_1 + e^{-Ah\frac{b_2}{2}}Z_2}{2}$. Then, on both
sides of~\eqref{M.2}, multiply by $e^{-Ah\frac{b_2}{2}}$, add $e^{Ah\frac{b_2}{2}}Z_1$, then divided by two, which yields an equivalent form of~\eqref{M.2}:
\begin{equation}
\begin{aligned}
Q_2 &= e^{Ah\frac{b_2}{2}}Z_1 + \frac{b_2}{2}h\cdot f(Q_2) \\
    &= e^{Ah(b_1 + \frac{b_2}{2})}q_k + b_1h\cdot e^{Ah(\frac{b_1}{2} + \frac{b_2}{2})}f(Q_1) + \frac{b_2}{2}h\cdot f(Q_2)\\
    &= e^{Ahc_2}q_k + b_1h\cdot e^{Ah(c_2 - c_1)}f(Q_1) + \frac{b_2}{2}h\cdot f(Q_2).
\end{aligned}\label{S.2}\tag{S.2}
\end{equation}
So equation~\eqref{S.2} coincides with the second line of the Butcher tableau of the DISEX method. Then, as before, we introduce $Q_i = \frac{e^{Ah\frac{b_i}{2}}Z_{i-1} + e^{-Ah\frac{b_i}{2}}Z_i}{2}$, and apply the same technique to~\eqref{M.i}, which yields
$$Q_i = e^{Ah\frac{b_i}{2}}Z_{i-1} + \frac{b_i}{2}h\cdot f(Q_i).$$
By induction,
\begin{align*}
Z_{i-1} &= e^{Ah(b_{i-1}+\cdots +b_2+b_1)}q_k + b_1h\cdot e^{Ah(b_{i-1}+b_{i-2}+\cdots +\frac{b_1}{2})}f(Q_1)\\
        &\qquad+ b_2h\cdot e^{Ah(b_{i-1}+b_{i-2}+\cdots +\frac{b_2}{2})}f(Q_2) + \cdots b_{i-1}h\cdot e^{Ah(\frac{b_{i-1}}{2})}f(Q_{i-1}),
\end{align*}
so
\begin{equation}
\begin{aligned}
Q_i &= ~e^{Ah(\frac{b_i}{2}+b_{i-1}+\cdots +b_2+b_1)}q_k + b_1h\cdot e^{Ah(\frac{b_i}{2}+ b_{i-1}+b_{i-2}+\cdots +\frac{b_1}{2})}f(Q_1) \\
    &\qquad+ b_2h\cdot e^{Ah(\frac{b_i}{2}+ b_{i-1}+b_{i-2}+\cdots +\frac{b_2}{2})}f(Q_2) + \cdots \\
    &\qquad+b_{i-1}h\cdot e^{Ah(\frac{b_i}{2}+ \frac{b_{i-1}}{2})}f(Q_{i-1})+ \frac{b_i}{2}h\cdot f(Q_i)\\
    &= e^{Ahc_i}q_k + b_1h\cdot e^{Ah(c_i - c_1)}f(Q_1) +\cdots\\
    &\qquad + b_{i-1}h\cdot e^{Ah(c_i - c_{i-1})}f(Q_{i-1}) + \frac{b_i}{2}h\cdot f(Q_i),
\end{aligned}    \tag{S.i}
\end{equation}
which coincides with the $i$-th row of the Butcher tableau of the DISEX method. Finally, we have
\begin{align*}
q_{k+1} &= Z_s \\
        &= e^{Ahb_s}Z_{s-1} + b_sh\cdot f(Q_s) \\
        &= e^{Ah(b_s+\cdots b_2 + b_1)}q_k + b_1h\cdot e^{Ah(b_s+\cdots b_2 + \frac{b_1}{2})}f(Q_1)+ \cdots \\
        &\qquad+ b_{s-1}h\cdot e^{Ah(b_s+\frac{b_{s-1}}{2})}f(Q_{s-1})+b_sh\cdot e^{Ah\frac{b_s}{2}}f(Q_s) \\
        &= e^{Ah}q_k + b_1h\cdot e^{Ah(1-c_1)}f(Q_1) + \cdots  b_sh\cdot e^{Ah(1-c_s)}f(Q_s),
\end{align*}
which coincides with the last row of the Butcher tableau of the DISEX method. So the composition of exponential midpoint rules with timesteps $b_1h, b_2h, b_3h, \cdots b_sh$ is equivalent to the DISEX method of Table~\ref{tDISEX}.
\end{proof}

\section{Energy-preserving Exponential Integrator}\label{senergyex}
Though classical symplectic methods exhibit superior long time stability, it was observed that symplectic schemes are less competitive for the numerical integration of stiff systems with high frequency. In sharp contrast, energy-preserving methods perform much better~\cite{SiGo1993}. A general way to construct an energy-preserving method for a Poisson system $\dot{q} = J\nabla H(q)$ is the discrete gradient method~\cite{QiTu1996}. We design a discrete gradient $\overline{\nabla} H(q_k, q_{k+1})$ that satisfies the following property,
\begin{equation}\label{dgcondition}
\overline{\nabla} H(q_k, q_{k+1})\cdot (q_{k+1} - q_k) = H(q_{k+1}) - H(q_k).
\end{equation}
Then, the resulting discrete gradient method is given by,
\begin{equation}\label{dg}
\frac{q_{k+1} - q_k}{h} = J\overline{\nabla} H(q_k, q_{k+1}).
\end{equation}
Multiplying $\overline{\nabla} H(q_k, q_{k+1})$ on both sides of \eqref{dg}, we obtain
\begin{equation}\label{eq44}
\begin{aligned}
        H(q_{k+1}) - H(q_k) & = \overline{\nabla} H(q_k, q_{k+1})\cdot (q_{k+1} - q_k)\\
                            & = h\cdot \overline{\nabla} H(q_k, q_{k+1})J\overline{\nabla} H(q_k, q_{k+1})\\
                            & = 0.
\end{aligned}
\end{equation}
The last equation of \eqref{eq44} holds simply due to the skew-symmetric property of matrix $J$, which implies that discrete gradient method~\eqref{dg} preserves energy. We shall combine exponential integrators with the discrete gradient method to obtain an energy-preserving exponential integrator. This approach was initially proposed in~\cite{WuLiSh2015} for separable Hamiltonian systems using the extended discrete gradient method, and we generalize this to semilinear Poisson systems. Replace the $f(q_k)$ term in the exponential Euler method \eqref{exeuler} by the discrete gradient $J\overline{\nabla} V(q_k, q_{k+1})$, which yields
\begin{equation}\label{energyex}
q_{k+1} = e^{Ah}q_k + \int_{0}^{h}e^{A\tau}d\tau \cdot J\overline{\nabla} V(q_k, q_{k+1}).
\end{equation}

\begin{theorem}
Method \eqref{energyex} preserves the Hamiltonian $H(q)$.
\end{theorem}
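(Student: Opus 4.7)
The plan is to mimic the continuous energy-conservation identity $\dot H=\nabla H^{\mathrm T}J\nabla H=0$, which rests on $J^{\mathrm T}=-J$. The immediate obstruction is that the matrix $\Phi:=\int_{0}^{h}e^{A\tau}\,d\tau$ appearing in \eqref{energyex} is not symmetric, so $\Phi J$ is not skew. My first step is to symmetrize about the half-step: writing $g:=\overline{\nabla}V(q_k,q_{k+1})$ and left-multiplying \eqref{energyex} by $e^{-Ah/2}$ yields
\[
e^{-Ah/2}q_{k+1}-e^{Ah/2}q_k=\Psi Jg,\qquad \Psi:=\int_{-h/2}^{h/2}e^{A\sigma}\,d\sigma.
\]
The substitution $\sigma\mapsto-\sigma$ combined with $A^{\mathrm T}=-A$ gives $\Psi^{\mathrm T}=\Psi$, and since $J$ commutes with $A$ (hence with $\Psi$), the matrix $\Psi J$ is skew-symmetric. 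This is the discrete substitute for $J$ in the continuous identity.

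Next I split $H=\tfrac12 q^{\mathrm T}Dq+V$ and treat the two pieces in parallel. Because $D$ commutes with $e^{At}$ and $(e^{At})^{\mathrm T}=e^{-At}$, the quadratic form is invariant under the linear flow, so $\tfrac12 q_k^{\mathrm T}Dq_k=\tfrac12(e^{Ah/2}q_k)^{\mathrm T}D(e^{Ah/2}q_k)$ and likewise $\tfrac12 q_{k+1}^{\mathrm T}Dq_{k+1}=\tfrac12(e^{-Ah/2}q_{k+1})^{\mathrm T}D(e^{-Ah/2}q_{k+1})$. Setting $q^\ast:=\tfrac12(e^{Ah/2}q_k+e^{-Ah/2}q_{k+1})$, so that $e^{Ah/2}q_k=q^\ast-\tfrac12\Psi Jg$ and $e^{-Ah/2}q_{k+1}=q^\ast+\tfrac12\Psi Jg$, direct expansion collapses the quadratic contribution to
\[
\tfrac12 q_{k+1}^{\mathrm T}Dq_{k+1}-\tfrac12 q_k^{\mathrm T}Dq_k=(q^\ast)^{\mathrm T}D\Psi Jg.
\]

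For the nonlinear piece I use the discrete-gradient property $V(q_{k+1})-V(q_k)=g^{\mathrm T}(q_{k+1}-q_k)$. The identity $e^{Ah/2}-e^{-Ah/2}=A\Psi$ together with $\Theta:=\tfrac12(e^{Ah/2}+e^{-Ah/2})$ gives $q_{k+1}-q_k=A\Psi q^\ast+\Theta\Psi Jg$. The term $g^{\mathrm T}\Theta\Psi Jg$ vanishes because $\Theta$ is symmetric and commutes with the skew matrix $\Psi J$, so $\Theta\Psi J$ is itself skew. The remaining term, using $A=JD$, $DJ=JD$, and $J\Psi=\Psi J$, satisfies $g^{\mathrm T}A\Psi q^\ast=g^{\mathrm T}\Psi JDq^\ast$; transposing this scalar and invoking $\Psi^{\mathrm T}=\Psi$, $J^{\mathrm T}=-J$, $D^{\mathrm T}=D$ gives $-(q^\ast)^{\mathrm T}D\Psi Jg$. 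Adding the quadratic and nonlinear contributions then yields $H(q_{k+1})-H(q_k)=(q^\ast)^{\mathrm T}D\Psi Jg-(q^\ast)^{\mathrm T}D\Psi Jg=0$.

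The main obstacle I expect is the first step: recognizing that the asymmetric factor $\Phi J$ in the scheme must be repackaged as $e^{Ah/2}\cdot\Psi J$, with $\Psi J$ skew, before the discrete-gradient algebra behind \eqref{eq44} can fire, and choosing the composite midpoint $q^\ast$ that lets the quadratic and nonlinear contributions share this common skew kernel. Once the symmetrization is in place, every subsequent manipulation is forced by the pairwise commutativity of $D$, $J$, $\Psi$, $\Theta$, and $e^{\pm Ah/2}$ guaranteed by $JD=DJ$, and the final cancellation is a discrete transcription of $\nabla H^{\mathrm T}J\nabla H=0$.
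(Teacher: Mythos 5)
Your proof is correct; every identity you use ($\Psi^{\mathrm T}=\Psi$, $A\Psi=e^{Ah/2}-e^{-Ah/2}$, the commutativity of $D$, $J$, and functions of $A$, and the discrete-gradient property of $\overline{\nabla}V$) checks out under the standing hypotheses $J^{\mathrm T}=-J$, $D^{\mathrm T}=D$, $JD=DJ$. The paper reaches the same conclusion by a direct expansion of $H(q_{k+1})$ in terms of $S=e^{Ah}$ and $T=\int_0^h e^{A\tau}d\tau$, using the four properties $S^{\mathrm T}=S^{-1}$, $AT=S-I$, $AT^{\mathrm T}=I-S^{\mathrm T}$, $S^{\mathrm T}T=T^{\mathrm T}$, and closing with $(q_k-q_{k+1})^{\mathrm T}\overline{\nabla}V=V(q_k)-V(q_{k+1})$. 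Your symmetrization about the half-step is an exact repackaging of those properties --- in particular $\Psi=e^{-Ah/2}T$ being symmetric is equivalent to the paper's Property 4, $S^{\mathrm T}T=T^{\mathrm T}$, and $A\Psi=e^{Ah/2}-e^{-Ah/2}$ is Property 2 conjugated by $e^{-Ah/2}$ --- so the two arguments are algebraically isomorphic. What your version buys is conceptual clarity: after conjugation the scheme reads $e^{-Ah/2}q_{k+1}-e^{Ah/2}q_k=\Psi J\,\overline{\nabla}V$ with $\Psi J$ skew-symmetric, so energy conservation becomes a literal discrete transcription of $\nabla H^{\mathrm T}J\nabla H=0$, and it makes visible why the construction generalizes (any skew ``discrete Poisson matrix'' paired with a discrete gradient would do). The paper's version is more pedestrian but requires no change of variables and isolates the four reusable properties of $S$ and $T$ explicitly.
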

\begin{proof}
Let $S = e^{Ah}$, $T = \int_{0}^{h}e^{A\tau}d\tau$, then $q_{k+1} = Sq_k + TJ\overline{\nabla} V(q_k, q_{k+1})$, and we will show that the following properties hold:
\begin{enumerate}
  \item $S^\mathrm{T} = S^{-1}$;
  \item $AT = S - I$;
  \item $AT^\mathrm{T} = I - S^\mathrm{T}$;
  \item $S^\mathrm{T}T = T^\mathrm{T}$.
\end{enumerate}
Property 1 follows from the fact that $S^\mathrm{T} = (e^{Ah})^\mathrm{T} = e^{-Ah} = S^{-1}$. Property 2 follows from
$$e^{Ah} - I = e^{A\tau}\bigg|^h_0 = \int_{0}^{h}A\cdot e^{A\tau}d\tau = AT.$$
Taking transposes on both sides of Property 2 and using the fact that $A$ and $T$ commute gives Property 3. Property 4 follow from
$$S^\mathrm{T}T = e^{-Ah}\int_{0}^{h}e^{A\tau}d\tau = \int_{0}^{h}e^{-A(h-\tau)}d\tau = \int_{0}^{h}e^{-A\tau}d\tau = T^\mathrm{T}.$$

Recall that the Hamiltonian of the constant Poisson system \eqref{poisson} is $H(q) = \frac{1}{2}q^\mathrm{T}Dq + V(q)$, so
\begin{equation}\label{eq4}
\begin{aligned}
        H(q_{k+1}) &=\frac{1}{2}q_{k+1}^\mathrm{T}Dq_{k+1} + V(q_{k+1})\\
        &=\frac{1}{2}(Sq_k + TJ\overline{\nabla} V)^\mathrm{T}D(Sq_k + TJ\overline{\nabla} V) + V(q_{k+1})\\
        &=\frac{1}{2}q_k^\mathrm{T}S^\mathrm{T}DSq_k + q_k^\mathrm{T}S^\mathrm{T}DTJ\overline{\nabla} V \\
        &\qquad+\frac{1}{2}{\overline{\nabla} V}^\mathrm{T}J^\mathrm{T}T^\mathrm{T}DTJ\overline{\nabla} V + V(q_{k+1})\\
        &=\frac{1}{2}q_k^\mathrm{T}Dq_k + q_k^\mathrm{T}S^\mathrm{T}AT\overline{\nabla} V +\frac{1}{2}{\overline{\nabla} V}^\mathrm{T}J^\mathrm{T}T^\mathrm{T}AT\overline{\nabla} V + V(q_{k+1}),
\end{aligned}
\end{equation}
Applying the properties above yields the following,
\begin{gather*}
	q_k^\mathrm{T}S^\mathrm{T}AT\overline{\nabla} V = q_k^\mathrm{T}T^\mathrm{T}A\overline{\nabla} V = q_k^\mathrm{T}(I - S^\mathrm{T})\overline{\nabla} V,\\
	J^\mathrm{T}T^\mathrm{T}AT = J^\mathrm{T}T^\mathrm{T}(S - I)= J^\mathrm{T}T + JT^\mathrm{T},\\
	\frac{1}{2}{\overline{\nabla} V}^\mathrm{T}J^\mathrm{T}T^\mathrm{T}AT\overline{\nabla} V = \frac{1}{2}{\overline{\nabla} V}^\mathrm{T}(J^\mathrm{T}T + JT^\mathrm{T})\overline{\nabla} V = (\overline{\nabla} V)^\mathrm{T} T^\mathrm{T}J\overline{\nabla} V.
\end{gather*}
Substituting these into the expression for $H(q_{k+1})$ yields
\begin{equation}\label{eq4}
    \begin{aligned}
        H(q_{k+1}) & = \frac{1}{2}q_k^\mathrm{T}Dq_k + [(I - S)q_k]^\mathrm{T}\overline{\nabla} V - (\overline{\nabla} V)^\mathrm{T}(TJ)\overline{\nabla} V + V(q_{k+1})\\
        & = \frac{1}{2}q_k^\mathrm{T}Dq_k + (q_k - Sq_k -TJ\overline{\nabla} V)^\mathrm{T}\overline{\nabla} V + V(q_{k+1})\\
        & = \frac{1}{2}q_k^\mathrm{T}Dq_k + (q_k - q_{k+1})^\mathrm{T}\overline{\nabla} V(q_k,q_{k+1}) + V(q_{k+1})\\
        & = \frac{1}{2}q_k^\mathrm{T}Dq_k - V(q_{k+1}) + V(q_k) +V(q_{k+1})\\
        & = H(q_k),
    \end{aligned}
\end{equation}
which proves that the Hamiltonian is preserved.
\end{proof}
One significant advantage of exponential integrators is they allow the numerical method to be implemented using fixed point iterations as opposed to the more computationally expensive Newton iterations. Recall that classical implicit Runge-Kutta methods
\begin{displaymath}
\left\{
\begin{aligned}
    Y_i &= ~y_n + h\cdot \displaystyle\sum_{j=1}^{s}a_{ij}f(t_n + c_jh, Y_j), \\
y_{n+1} &= ~y_n + h\cdot \displaystyle\sum_{i=1}^{s}b_if(t_n + c_ih, Y_i),
\end{aligned} \right.
\end{displaymath}
have a form that naturally lends itself to fixed point iterations. However, when $\frac{\partial f}{\partial y}$ has a large spectral radius, the timestep is forced to be very small in order to guarantee that the fixed point iteration converges. The alternative is to use a Newton type iteration, which is time consuming since we need to perform LU decomposition ($O(n^3)$ complexity) during each iteration. This is the problem we face for the stiff semilinear system \eqref{semilinear} when the coefficient matrix $A$ has a large spectral radius. In contrast, in both the exponential midpoint rule
\begin{equation*}
q_{k+1} = e^{Ah}q_k + h\cdot e^{A\frac{h}{2}}f\Big(\frac{e^{A\frac{h}{2}}q_k + e^{-A\frac{h}{2}}q_{k+1}}{2}\Big),
\end{equation*}
and the energy-preserving exponential integrator
\begin{equation*}
q_{k+1} = e^{Ah}q_k + \int_{0}^{h}e^{A\tau}d\tau \cdot J\overline{\nabla} V(q_k, q_{k+1}),
\end{equation*}
the matrix $A$ only appears in the exponential term $e^{Ah}$. Since $A$ is skew-symmetric, this term is an orthogonal matrix, which has spectral radius 1. Thus, fixed point iterations can be used to implement the exponential integrator, regardless of the stiffness of $A$.

\section{Numerical Methods}\label{numerical_methods}
We consider two Hamilton PDEs here, namely the nonlinear Schr\"{o}dinger equation,
\begin{equation}\label{nls}
i{\psi}_t + {\psi}_{xx} - 2|\psi|^2\psi = 0,
\end{equation}
in which $\psi=u+iv$ is the wave function with real part $u$ and imaginary part $v$, and has the following equivalent form,
\begin{equation}\label{equNLS}
\left\{ \begin{aligned}
u_t &= -v_{xx} + 2(u^2+v^2)v,\\
v_t &= u_{xx} - 2(u^2+v^2)u;
\end{aligned} \right.
\end{equation}
as well as the KdV equation,
\begin{equation}\label{kdv}
u_t + uu_x + u_{xxx} = 0.
\end{equation}
To discretize the two PDEs,  we impose $2\pi$ periodic boundary conditions. Given a smooth $2\pi$ periodic function $f(x)$, on the interval $[0,2\pi]$, choose $2n+1$ equispaced interpolation points $x_j = jh, j=0,1,2,\ldots,2n$, $h=\frac{2\pi}{2n+1}$. Given nodal values $\{v_j\}_{j=0}^{2n}$, there exists a unique trigonometric polynomial $v(x)$ with degree less or equal $n$, such that, $v(x_j)=v_j$ (see, for example,~\cite{AtHa2009}).
\begin{displaymath}
           v(x)= \displaystyle\sum_{k=-n}^{n}\hat{v}_ke^{ikx}, \quad \hat{v}_k =\frac{1}{2n+1}\displaystyle\sum_{j=0}^{2n}v_je^{-ikx_j}.
\end{displaymath}
By substituting the expression for the coefficients $\hat{v}_k$, we obtain
\begin{equation}
    \begin{aligned}
        v(x) & = \displaystyle\sum_{k=-n}^{n}\Bigg(\frac{1}{2n+1}\displaystyle\sum_{j=0}^{2n}v_je^{-ikx_j}\Bigg)e^{ikx}\\
             & = \displaystyle\sum_{j=0}^{2n}v_j\Bigg(\displaystyle\sum_{k=-n}^{n}\frac{1}{2n+1}e^{ik(x-x_j)}\Bigg)\\
             & = \displaystyle\sum_{j=0}^{2n}v_j\phi(x-x_j)\\
             & = \displaystyle\sum_{j=0}^{2n}v_j\phi_j(x),
    \end{aligned}
\end{equation}
where
\begin{displaymath}
  \phi(x) = \displaystyle\sum_{k=-n}^{n}\frac{1}{2n+1}e^{ikx} = \frac{1}{2n+1}\frac{\sin((n+\frac{1}{2})x)}{\sin(\frac{x}{2})}.
\end{displaymath}
From this, we see that $\{e^{ikx}\}^n_{k=-n}$ and $\{\phi_j\}^{2n}_{j=0}$ are equivalent orthogonal bases for the trigonometric polynomial function space, and each such function can be parametrized by either the nodal values $\{v_j\}^{2n}_{j=0}$ or Fourier coefficients $\{\hat{v}_k\}^n_{k=-n}$. They represent the same function, but with respect to two different bases. The transformation between $\{v_j\}^{2n}_{j=0}$ and $\{\hat{v}_k\}^n_{k=-n}$ can be performed using the Fast Fourier transformation (FFT), which has $O(n\log n)$ complexity.
\qquad

The first and second-order differentiation matrices~\cite{Tr2000} with respect to the representation in terms of nodal values $\{v_j\}^{2n}_{j=0}$ are given by
\begin{align*}
(D_1)_{kj}&=\begin{cases}
           0, &k=j,\\
           \frac{(-1)^{(k-j)}}{2\sin (\frac{(k-j)h}{2})}, &k\ne j,
\end{cases}\\
(D_2)_{kj}&=\begin{cases}
           -\frac{n(n+1)}{3}, &k=j,\\
           \frac{(-1)^{(k-j+1)}\cos (\frac{(k-j)h}{2})}{2\sin^2 (\frac{(k-j)h}{2})}, &k\ne j,\\
\end{cases}
\end{align*}
respectively. However, with respect to the representation in terms of Fourier coefficients $\{\hat{v}_k\}^n_{k=-n}$, they are diagonal,
$$\hat{D}_1 = \operatorname{diag}(ik)^n_{k=-n}, \quad \hat{D}_2 = \operatorname{diag}(-k^2)^n_{k=-n}.$$
Later, we will see that this observation is critical to a fast implementation of the product of matrix functions with vectors. We can also define a third-order differentiation matrix $D_3$, which has the property $D_3 = D_1D_2 = D_2D_1$, and it is diagonal with respect to the Fourier coefficients $\hat{D}_3 = \operatorname{diag}(-ik^3)^n_{k=-n}$.

\subsection{Nonlinear Schr\"{o}dinger equation}
We perform a semi-discretization of \eqref{equNLS} by discretizing the solution $u$, $v$ in space using their corresponding nodal values $\{q_j\}$ and $\{p_j\}$. Applying the pseudospectral method, we obtain the following system of ODEs,
\begin{equation}\label{nlsode}
\left\{ \begin{aligned}
\dot{q} & = -D_2p + 2(q^2 + p^2)p, \\
\dot{p} & =  D_2q  - 2(q^2 + p^2)q,
\end{aligned} \right.
\end{equation}
where the nonlinear term $(q^2 + p^2)p$ is computed elementwise, and represents the vector consisting of $\{(q^2_j + p^2_j)p_j\}$ entries. We adopt this notation throughout the rest of the paper for brevity. Then, \eqref{nlsode} can be expressed as,
\begin{equation} \label{nlsodepoisson}
\frac{d}{dt}
  \begin{pmatrix}
   q \\
   p
  \end{pmatrix}
=\begin{pmatrix}
0 & -D_2 \\
D_2 & 0
\end{pmatrix}
\begin{pmatrix}
   q \\ƒ 
   p ƒ 
  \end{pmatrix}
   +
\begin{pmatrix}
   2(q^2 + p^2)p \\
   -2(q^2 + p^2)q
  \end{pmatrix},
\end{equation}
where
\begin{displaymath}
A = \begin{pmatrix}
0 & -D_2 \\
D_2 & 0
\end{pmatrix}
=
\begin{pmatrix}
0 & I \\
-I & 0	
\end{pmatrix}
\begin{pmatrix}
-D_2 & 0 \\
0 & -D_2
\end{pmatrix} =
J\cdot D,
\end{displaymath}

\begin{displaymath}
f(q, p ) = \begin{pmatrix}
   2(q^2 + p^2)p \\
   -2(q^2 + p^2)q
\end{pmatrix}
 =
\begin{pmatrix}	
0 & I \\
-I & 0
\end{pmatrix}
\begin{pmatrix}
   2(q^2 + p^2)q \\
   2(q^2 + p^2)p	
\end{pmatrix}
 =
J\cdot \overline{\nabla} V(q,p),
\end{displaymath}
and $V(q, p) = \frac{1}{2}(q^2 + p^2)^2$. It is easy to verify that $J$ is skew-symmetric, $D$ is symmetric, and $JD = DJ$. Thus, \eqref{nlsodepoisson} is a semilinear Poisson system.

To apply the exponential midpoint rule, we need to compute the product of a matrix function and a vector, which has the form $e^{Ah}\begin{pmatrix}q\\p\end{pmatrix}$,

\begin{align*}
  e^{\begin{psmallmatrix}
   0 & -D_2 \\
   D_2 & 0
 \end{psmallmatrix}} &=  \displaystyle\sum_{k=0}^{\infty}\frac{1}{k!}{\begin{psmallmatrix}
   0 & -D_2 \\
   D_2 & 0
 \end{psmallmatrix}}^k,  \\
&=  \displaystyle\sum_{k=0}^{\infty}\frac{1}{(2k)!}
\begin{psmallmatrix}
(-1)^k\cdot (D_2)^{2k} & 0 \\
0 & (-1)^k\cdot (D_2)^{2k}\\
\end{psmallmatrix}\\
&\qquad + \displaystyle\sum_{k=0}^{\infty}\frac{1}{(2k+1)!}
\begin{psmallmatrix}
0 & (-1)^{k+1}\cdot (D_2)^{2k+1} \\
(-1)^k \cdot (D_2)^{2k+1}& 0 \\
\end{psmallmatrix} \\
&=  \begin{psmallmatrix}
             \cos(D_2) & -\sin(D_2) \\
             \sin(D_2) & \cos(D_2)
           \end{psmallmatrix}.
\end{align*}

\begin{theorem} \label{matrixtimesv}
Suppose that $D_2$, $D_3$ are the second and third-order differentiation matrices, respectively, $q = \{q_j\}_{j=0}^{2n}$ is a vector with Fourier
transform $F[q] = \hat{q} = \{\hat{q}_k\}_{k=-n}^{n}$, and $f$ is an analytic function, then
$$f(D_2)q = F^{-1}[\operatorname{diag}(f(-k^2))\hat{q}],\qquad f(D_3)q = F^{-1}[\operatorname{diag}(f(-ik^3))\hat{q}],$$
where $F^{-1}$ is the inverse Fourier transform.
\end{theorem}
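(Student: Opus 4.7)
The plan is to exploit the fact that the second- and third-order differentiation matrices are simultaneously diagonalized by the discrete Fourier transform. The paper has already observed that the nodal-value matrices $D_2, D_3$ and their Fourier-side counterparts $\hat{D}_2 = \operatorname{diag}(-k^2)$, $\hat{D}_3 = \operatorname{diag}(-ik^3)$ represent the same linear operators with respect to two different bases related by the DFT. This should be turned into the matrix identity
\[
D_2 \;=\; F^{-1}\hat{D}_2\, F, \qquad D_3 \;=\; F^{-1}\hat{D}_3\, F,
\]
which I would justify by applying both sides to an arbitrary nodal-value vector $q$ and invoking the change-of-basis formula already set up in the preceding paragraphs.

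From here the argument is a standard power-series manipulation. First, for any positive integer $m$, induction gives $D_2^{\,m} = F^{-1}\hat{D}_2^{\,m}F$, because the conjugating factors $F$ and $F^{-1}$ telescope. Since $f$ is analytic and since the spectrum of $D_2$ is the finite set $\{-k^2\}_{k=-n}^{n}$, the matrix function $f(D_2)$ is defined by the convergent Taylor series $f(D_2) = \sum_m a_m D_2^{\,m}$. Substituting the conjugation identity term by term and pulling $F^{-1}$ and $F$ out of the (finite-dimensional, hence absolutely convergent) sum yields
\[
f(D_2) \;=\; F^{-1}\Bigl(\textstyle\sum_m a_m \hat{D}_2^{\,m}\Bigr) F \;=\; F^{-1} f(\hat{D}_2)\, F.
\]
Because $\hat{D}_2$ is diagonal, $f(\hat{D}_2) = \operatorname{diag}(f(-k^2))$, so applying the formula to $q$ and using $Fq = \hat{q}$ gives the first claimed identity. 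The same chain of equalities with $\hat{D}_3 = \operatorname{diag}(-ik^3)$ in place of $\hat{D}_2$ produces the second.

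The only subtle point, and the step I would state most carefully, is the justification for defining $f(D_2)$ and $f(D_3)$ via the Taylor series in the first place: one needs $f$ to be analytic on a neighborhood of the spectrum, but since the spectra are finite sets on the real axis and imaginary axis respectively, any entire or sufficiently-large-radius analytic $f$ suffices — and in the applications of interest ($f(z)=e^{zh}$, $\cos$, $\sin$, $(e^{zh}-1)/z$) the function is entire, so no convergence issue arises. Everything else is bookkeeping, so I do not expect a genuine obstacle beyond cleanly writing out the conjugation identity.
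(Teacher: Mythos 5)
Your proof is correct and follows essentially the same route as the paper: both arguments rest on the fact that the DFT diagonalizes $D_2$ and $D_3$ with eigenvalues $-k^2$ and $-ik^3$, the paper writing this as $f(D_2)e_k = f(\lambda_k)e_k$ applied to the eigenvector expansion of $q$, and you writing it as the equivalent conjugation identity $f(D_2) = F^{-1}f(\hat{D}_2)F$. The extra care you take about defining $f$ of a matrix via its Taylor series is a reasonable refinement but not a substantive difference.
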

\begin{proof}
Recall that matrix $D_2$ is diagonalizable with eigenvalues $\lambda_k = -k^2$, and corresponding eigenvectors $e_k = \{e^{ikx_j}\}_{j=0}^{2n}$,
\begin{align*}
  f(D_2)q & = f(D_2)\Bigg(\displaystyle\sum_{k=-n}^{n}\hat{q}_k\cdot e_k\Bigg) \\
          & = \displaystyle\sum_{k=-n}^{n}\hat{q}_k \cdot f(D_2)e_k \\
          & = \displaystyle\sum_{k=-n}^{n}\hat{q}_k \cdot f(\lambda_k)e_k \\
          & = F^{-1}[\operatorname{diag}(f(\lambda_k))\hat{q}]\\
          & = F^{-1}[\operatorname{diag}(f(-k^2))\hat{q}].
\end{align*}
Notice $D_3$ is also diagonalizable with eigenvalues $\lambda_k = -ik^3$, and corresponding eigenvectors $e_k$, so the property that $f(D_3)q = F^{-1}[\operatorname{diag}(f(-ik^3))\hat{q}]$ can be verified in the same way.
\end{proof}

By Theorem~\ref{matrixtimesv}, we have that
\begin{equation}\label{eq100}
\begin{aligned}
  e^{\begin{psmallmatrix}
   0 & -D_2 \\
   D_2 & 0
 \end{psmallmatrix}h}
 \begin{pmatrix}
 	q \\
 	p
 \end{pmatrix}
 & =  \begin{pmatrix}
 		\cos(D_2h) & -\sin(D_2h) \\
    	\sin(D_2h) & \cos(D_2h)
	\end{pmatrix}
 \begin{pmatrix}
 	q \\
 	p
 \end{pmatrix}\\
  & =  \begin{pmatrix}\cos(D_2h)\cdot q - \sin(D_2h)\cdot p \\ \sin(D_2h)\cdot q + \cos(D_2h)\cdot p\end{pmatrix}\\
  & =  \begin{pmatrix} F^{-1}[\cos(k^2h)\hat{q}_k + \sin(k^2h)\hat{p}_k] \\ F^{-1}[\cos(k^2h)\hat{p}_k - \sin(k^2h)\hat{q}_k]\end{pmatrix}.
\end{aligned}
\end{equation}

In summary, the exponential midpoint rule for the nonlinear Schr\"{o}dinger equation is given by
\begin{equation*}
z_{k+1} = e^{Ah}z_k + h\cdot e^{A\frac{h}{2}}f\bigg(\frac{e^{A\frac{h}{2}}z_k + e^{-A\frac{h}{2}}z_{k+1}}{2}\bigg),
\end{equation*}
where $z_k =\begin{pmatrix}
   q_k \\
   p_k\end{pmatrix}$, $z_{k+1} = \begin{pmatrix}
   q_{k+1} \\
   p_{k+1}\end{pmatrix}$, $A = \begin{pmatrix}
0 & -D_2 \\
D_2 & 0\end{pmatrix}$, and  $e^{Ah}z_k$ can be efficiently calculated using \eqref{eq100}.

For the energy-preserving exponential integrator for the nonlinear Schr\"{o}dinger equation,
\begin{equation*}
z_{k+1} = e^{Ah}z_k + \int_{0}^{h}e^{A\tau}d\tau \cdot J\overline{\nabla} V(z_k, z_{k+1}).
\end{equation*}
Here,
\begin{align*}
  \int_{0}^{h} e^{\begin{psmallmatrix}
   0 & -D_2 \\
   D_2 & 0
 \end{psmallmatrix}\tau} d\tau &=  \int_{0}^{h}\begin{psmallmatrix}
             \cos(D_2\tau) & -\sin(D_2\tau) \\
             \sin(D_2\tau) & \cos(D_2\tau)
           \end{psmallmatrix}d\tau \\
  &=  h\cdot \begin{pmatrix}
               \frac{\sin(D_2h)}{D_2h} & \frac{\cos(D_2h)-1}{D_2h} \\
               \frac{1-\cos(D_2h)}{D_2h} & \frac{\sin(D_2h)}{D_2h}
             \end{pmatrix},
\end{align*}
and we can construct the discrete gradient
\begin{displaymath}
\overline{\nabla} V(z_k,z_{k+1}) = \begin{pmatrix}
                                2({q^2}_{k+\frac{1}{2}}+{p^2}_{k+\frac{1}{2}})\cdot q_{k+\frac{1}{2}} \\
                                2({q^2}_{k+\frac{1}{2}}+{p^2}_{k+\frac{1}{2}})\cdot p_{k+\frac{1}{2}}
                              \end{pmatrix},
\end{displaymath}
where
\begin{alignat}{2}
q_{k+\frac{1}{2}} &= \frac{q_k + q_{k+1}}{2},\quad & p_{k+\frac{1}{2}} &= \frac{p_k + p_{k+1}}{2},\\
{q^2}_{k+\frac{1}{2}} &= \frac{q^2_k + q^2_{k+1}}{2}, \quad & {p^2}_{k+\frac{1}{2}} &= \frac{p^2_k + p^2_{k+1}}{2}.	
\end{alignat}
Notice that $\overline{\nabla} V(z_k,z_{k+1})$ is symmetric with respect to $z_k$ and $z_{k+1}$, and it can be verified that it satisfies \eqref{dgcondition}.
A classical discrete gradient method can be constructed as follows,
\begin{align}\label{dgfornls}
  z_{k+1} = z_k + h J\overline{\nabla} H(z_k, z_{k+1}),
\end{align}
where
$$\overline{\nabla} H(z_k, z_{k+1}) = \begin{pmatrix}
-D_2 & 0 \\
0 & -D_2
\end{pmatrix}\frac{z_k + z_{k+1}}{2} + \overline{\nabla} V(z_k, z_{k+1}).$$
The method described by \eqref{dgfornls} is very similar to classical midpoint rule, the only difference is in $\overline{\nabla} V(z_k, z_{k+1})$, ${q^2}_{k+\frac{1}{2}}$ is used, while $(q_{k+\frac{1}{2}})^2$ is used in the midpoint rule, so \eqref{dgfornls} can be viewed as a modified midpoint rule.

\subsection{KdV equation}
Rewrite \eqref{kdv} as
\begin{equation*}
 u_t = \Big(-\frac{\partial}{\partial x}\Big)\Big(\frac{1}{2}u^2 + u_{xx}\Big),
\end{equation*}
then apply pseudospectral semi-discretization to obtain the following system,
\begin{align*}
  \dot{q} &= (-D_1)\Big(\frac{1}{2}q^2 + D_2q\Big) \\
          &= (-D_1)D_2q + (-D_1)\Big(\frac{1}{2}q^2\Big),
\end{align*}
which has the form of a semilinear Poisson system~\eqref{poisson},
\begin{equation*}
\dot{q} = J(Dq + \nabla V(q)) = J\nabla H(q),
\end{equation*}
where $J = -D_1$, $D = D_2$, $A = JD = -D_3$, $\nabla V(q) = \frac{1}{2}q^2$, $H(q) = \frac{1}{2}q^{\mathrm{T}}D_2q + \frac{1}{6}q^3$. The exponential midpoint rule for KdV
reads as follows,
\begin{equation*}
q_{k+1} = e^{-D_3h}q_k + h\cdot e^{-D_3\frac{h}{2}}f\Big(\frac{e^{-D_3\frac{h}{2}}q_k + e^{D_3\frac{h}{2}}q_{k+1}}{2}\Big),
\end{equation*}
and the energy preserving exponential integrator is given by,
\begin{equation*}
q_{k+1} = e^{-D_3h}q_k + \int_{0}^{h}e^{-D_3\tau}d\tau \cdot (-D_1)\overline{\nabla} V(q_k, q_{k+1}),
\end{equation*}
with discrete gradient $\overline{\nabla} V(q_k, q_{k+1})= \frac{1}{6}(q_k^2 + q_k\cdot q_{k+1} +q_{k+1}^2)$. A related
classical discrete gradient method can be constructed as follows,
\begin{align}\label{dgforkdv}
  \overline{\nabla} H(q_k, q_{k+1}) = (D_2)\frac{q_k + q_{k+1}}{2} + \overline{\nabla} V(q_k, q_{k+1}).
\end{align}

By Theorem~\ref{matrixtimesv}, in each iteration, the matrix function and vector product can be implemented as
$$e^{-D_3h}q = F^{-1}[e^{ik^3h}\hat{q}_k],$$ and
$$\Bigg(\int_{0}^{h}e^{-D_3\tau}d\tau\Bigg)q = \Big(\frac{e^{-D_3 h}-I}{-D_3}\Big)q = F^{-1}\Big[\frac{e^{ik^3h}-1}{ik^3}\hat{q}_k\Big].$$

\section{Numerical Experiments}\label{numerical_experiments}
\subsection{Nonlinear Schr\"{o}dinger equation}
In Table~\ref{table:maximum_timestep} below, $n$ denotes the number of nodes we discretize the spatial domain with, and the data in the table indicates the maximum timesteps for which the nonlinear solver converges. The first two columns correspond to midpoint rule, using fixed point iteration and Newton type iteration; the third column corresponds to the exponential midpoint rule, the fourth column is discrete gradient method \eqref{dgfornls}, and the last column is the energy-preserving exponential integrator, all implemented using fixed point iterations.

\begin{table}
\begin{tabular}{|c|c|c|c|c|c|}
\hline
& \multicolumn{2}{c|}{midpoint}               & midpoint exp     &discrete gradient       &energy exp\\
\hline $n$   & fixed point       & Newton & fixed point      & fixed point            & fixed point \\
\hline 11    & 0.02              &        0.1       &    0.1           &0.02                    & 0.1 \\
\hline 21    & 0.01              &        0.1       &   0.08           &0.01                    & 0.1 \\
\hline 41    & $4\times 10^{-3}$ &        0.1       & 0.06             & $4\times 10^{-3}$      & 0.1 \\
\hline 61    & $2\times 10^{-3}$ & 0.1              & 0.04             & $2\times 10^{-3}$      & 0.1 \\
\hline 81 & $10^{-3}$         & 0.1 & 0.04                    & $10^{-3}$                 & 0.1 \\
\hline 121 & $5\times 10^{-4}$ & 0.1 & 0.04                  & $5\times 10^{-4}$                 & 0.1 \\
\hline 161 & $2\times 10^{-4}$ & 0.1 & 0.01                 & $2\times 10^{-4}$                   & 0.1 \\
\hline 201 & $1\times 10^{-4}$ & 0.1 & $8\times 10^{-3}$     & $1\times 10^{-4}$                  & 0.1 \\
\hline 401 & $4\times 10^{-5}$ & 0.1 & $5\times 10^{-3}$    & $4\times 10^{-5}$                   & 0.1 \\
\hline
\end{tabular}
\caption{Maximum timestep for nonlinear solver to converge as a function of numerical integrator, nonlinear solver, and spatial resolution.}
\label{table:maximum_timestep}
\end{table}

We observe that when the midpoint rule is implemented using fixed point iterations, there is a significant decay in the allowable timestep as the spatial resolution is increased, whereas the Newton type iteration allows a relatively large timestep that is independent of the spatial resolution. In contrast, the midpoint exponential method exhibits a slower rate of decrease in allowable timestep when using fixed point iterations. When using fixed point iterations, the discrete gradient method, which is an energy preserving method, the allowable timestep behaves similarly to the midpoint rule, and in contrast, the energy preserving exponential integrator has an allowable timestep that is independent of the spatial resolution.

\begin{figure}[H]
\includegraphics[scale=0.75]{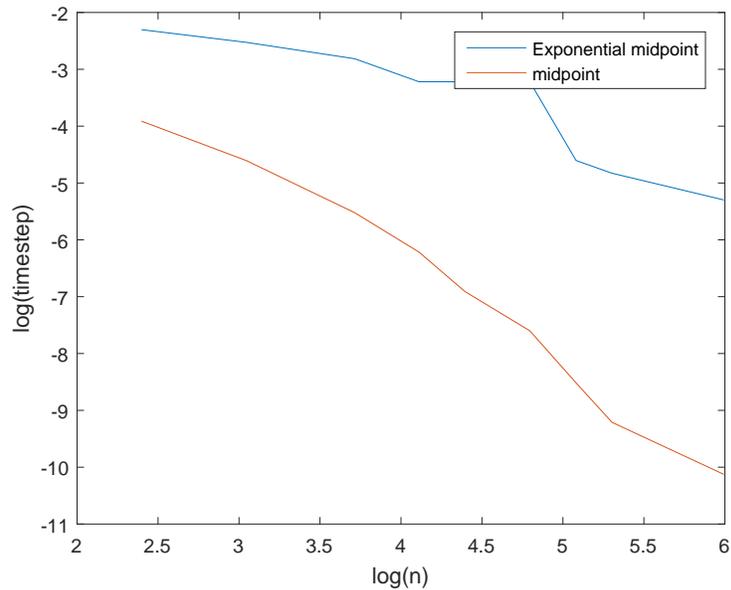}
\caption{Maximum timestep for which fixed point iterations converge as a function of the spatial resolution.\label{fig:timestep_vs_resolution}}
\end{figure}

In Figure~\ref{fig:timestep_vs_resolution}, we observe that the allowable timestep when using fixed point iteration scale like $n^{-2}$ for the classical midpoint rule, and $n^{-1}$ for the midpoint exponential rule.

\begin{figure}[H]
	\begin{subfigure}[b]{0.45\textwidth}
		\includegraphics[scale=0.4]{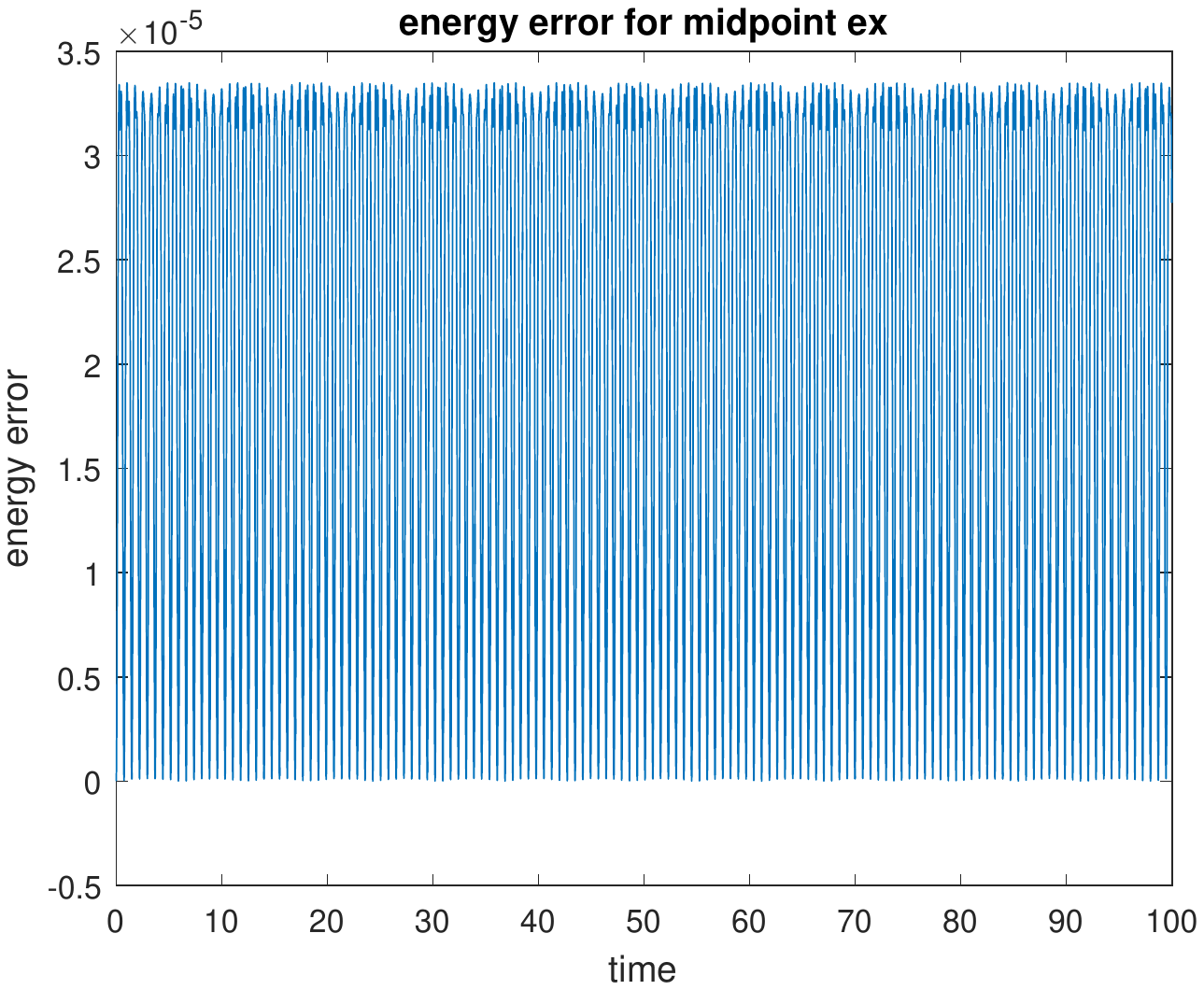}
		\caption{Energy error}
	\end{subfigure}
	\begin{subfigure}[b]{0.45\textwidth}
		\includegraphics[scale=0.4]{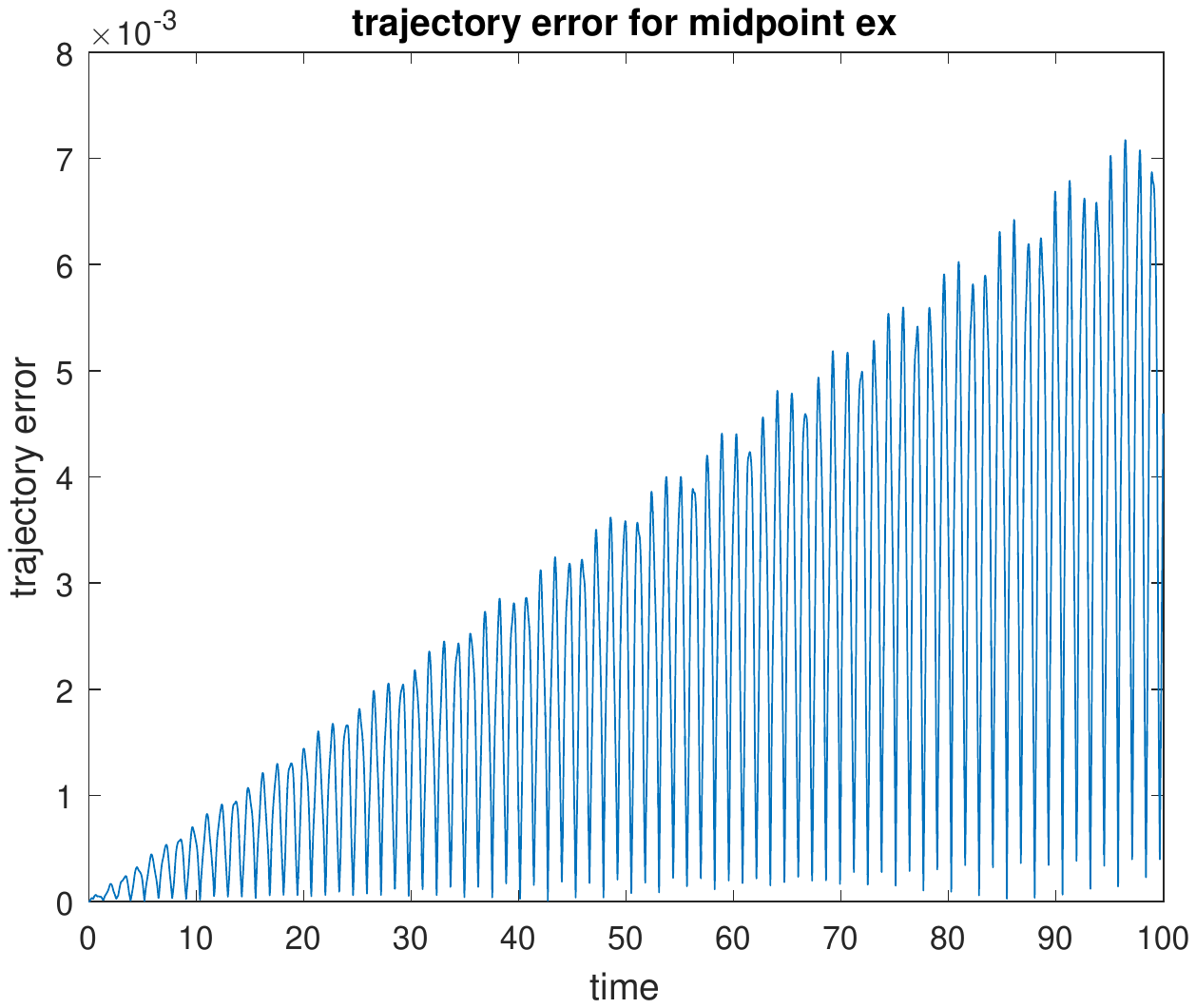}
		\caption{Trajectory error}
\end{subfigure}
\caption{Error plots for the exponential midpoint rule, $n = 161$, $timestep = 0.01$.\label{fig:exp_midpoint}}
\end{figure}

As shown in Figure~\ref{fig:exp_midpoint}, the exponential midpoint rule exhibits an energy error that remains small and bounded, which is consistent with it being a symplectic integrator, and the trajectory error grows linearly.
\newline

\begin{figure}[H]
	\begin{subfigure}[b]{0.45\textwidth}
		\includegraphics[scale=0.4]{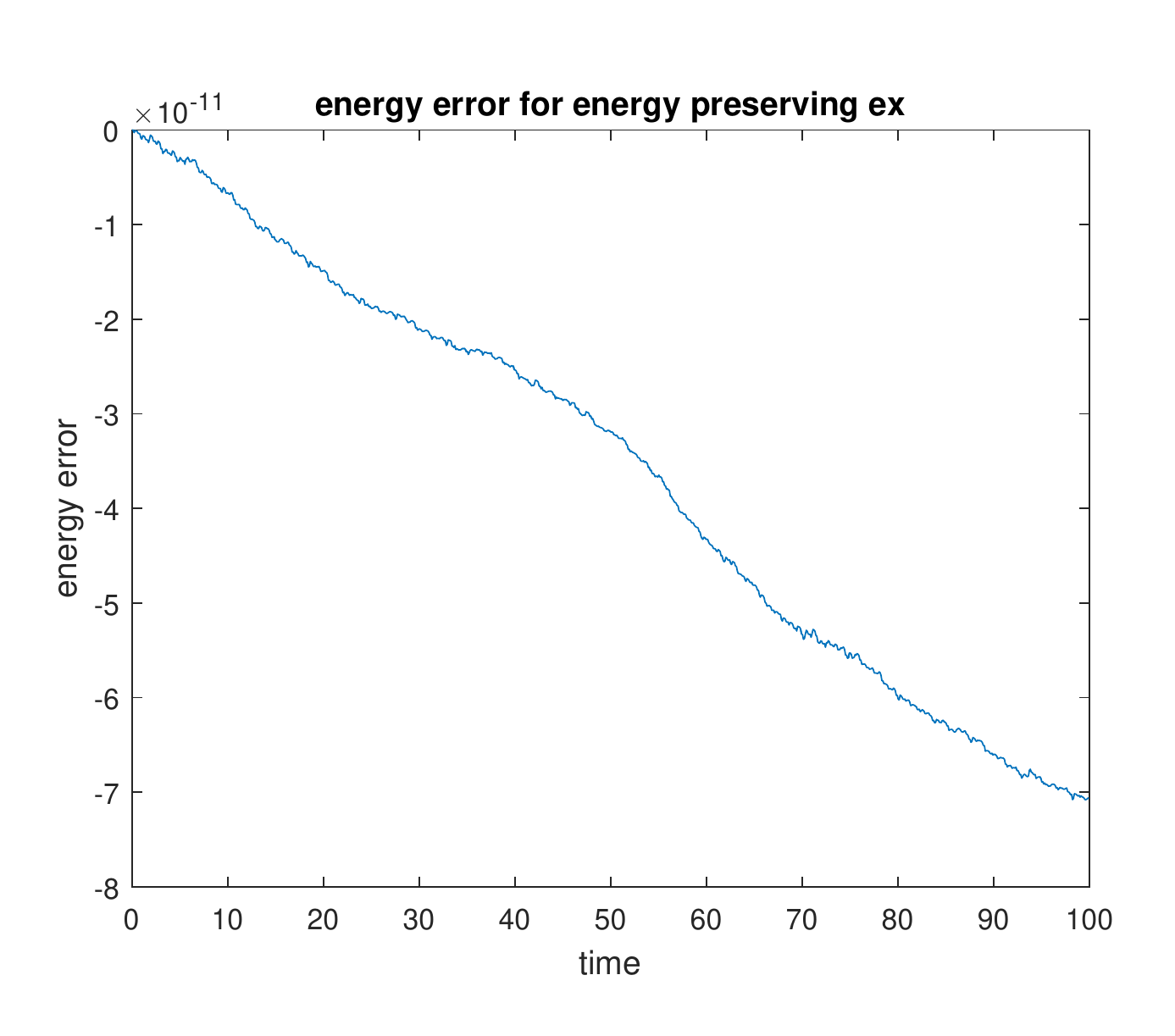}
		\caption{Energy error}
	\end{subfigure}
	\begin{subfigure}[b]{0.45\textwidth}
		\includegraphics[scale=0.4]{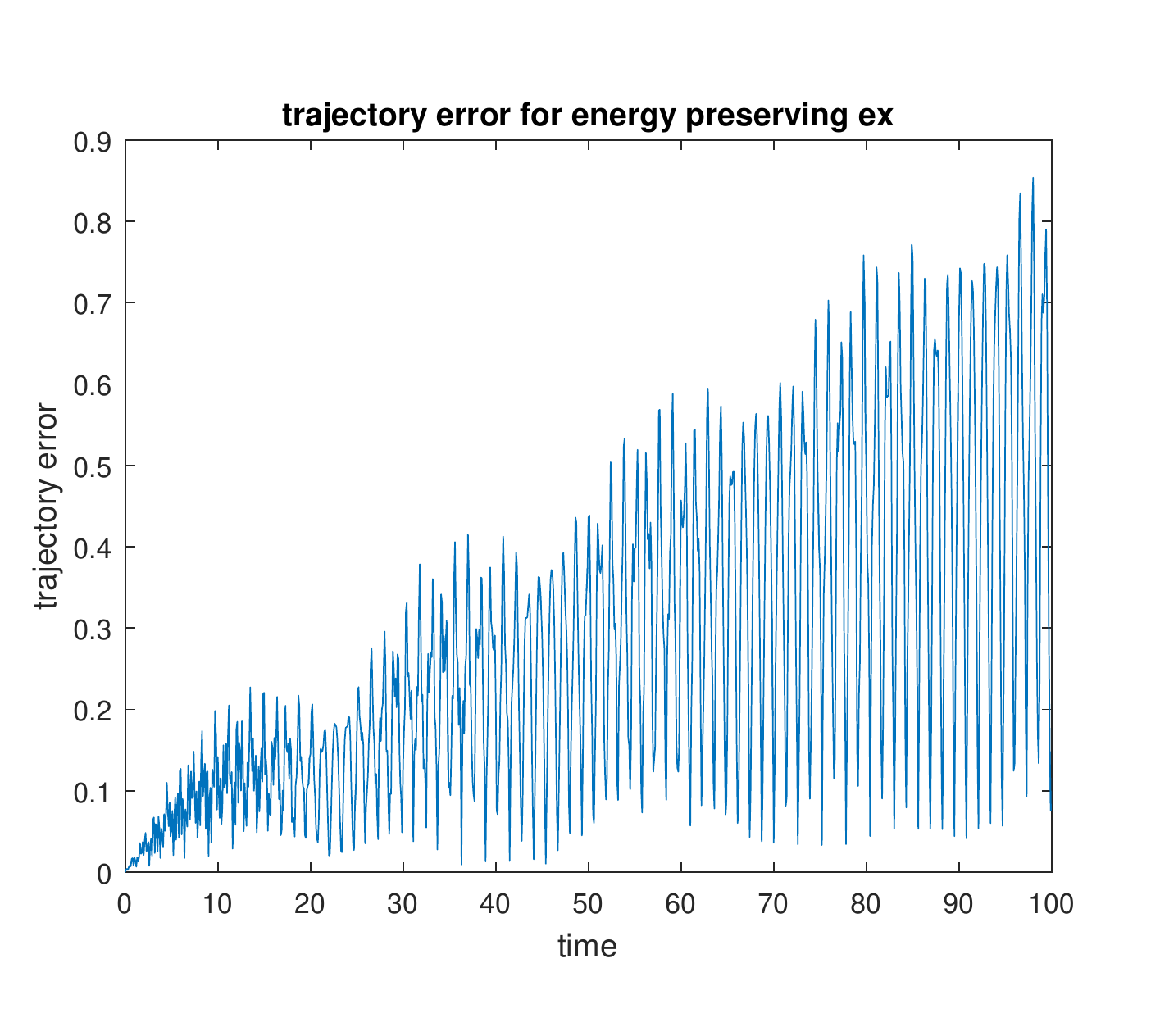}
		\caption{Trajectory error}
\end{subfigure}
\caption{Error plots for the energy preserving exponential integrator, $n = 161$, $timestep = 0.1$.\label{fig:energy_exp}}
\end{figure}

The energy preserving exponential integrator is designed to preserve energy exactly, so even when the timestep is
$0.1$, we see in Figure~\ref{fig:energy_exp} that the energy is still preserved approximately to within machine error.
\newline

We now consider the diagonally implicit symplectic exponential (DISEX) integrator with six stages:
\begin{align*}
    b_1    &= 0.5080048194000274     & b_2  &= 1.360107162294827  &
      b_3  &= 2.019293359181722               \\
    b_4   &= 0.5685658926458250     &b_5 &= -1.459852049586439 &
      b_6 &= -1.996119183935963.                \\
\end{align*}

The energy and trajectory error is shown in Figure~\ref{fig:disex}. Observe that the energy error is small and bounded, as expected of a symplectic integrator, and the trajectory error is small as well, as expected of a higher-order numerical integrator.

\begin{figure}[H]
	\begin{subfigure}[b]{0.45\textwidth}
		\includegraphics[scale=0.4]{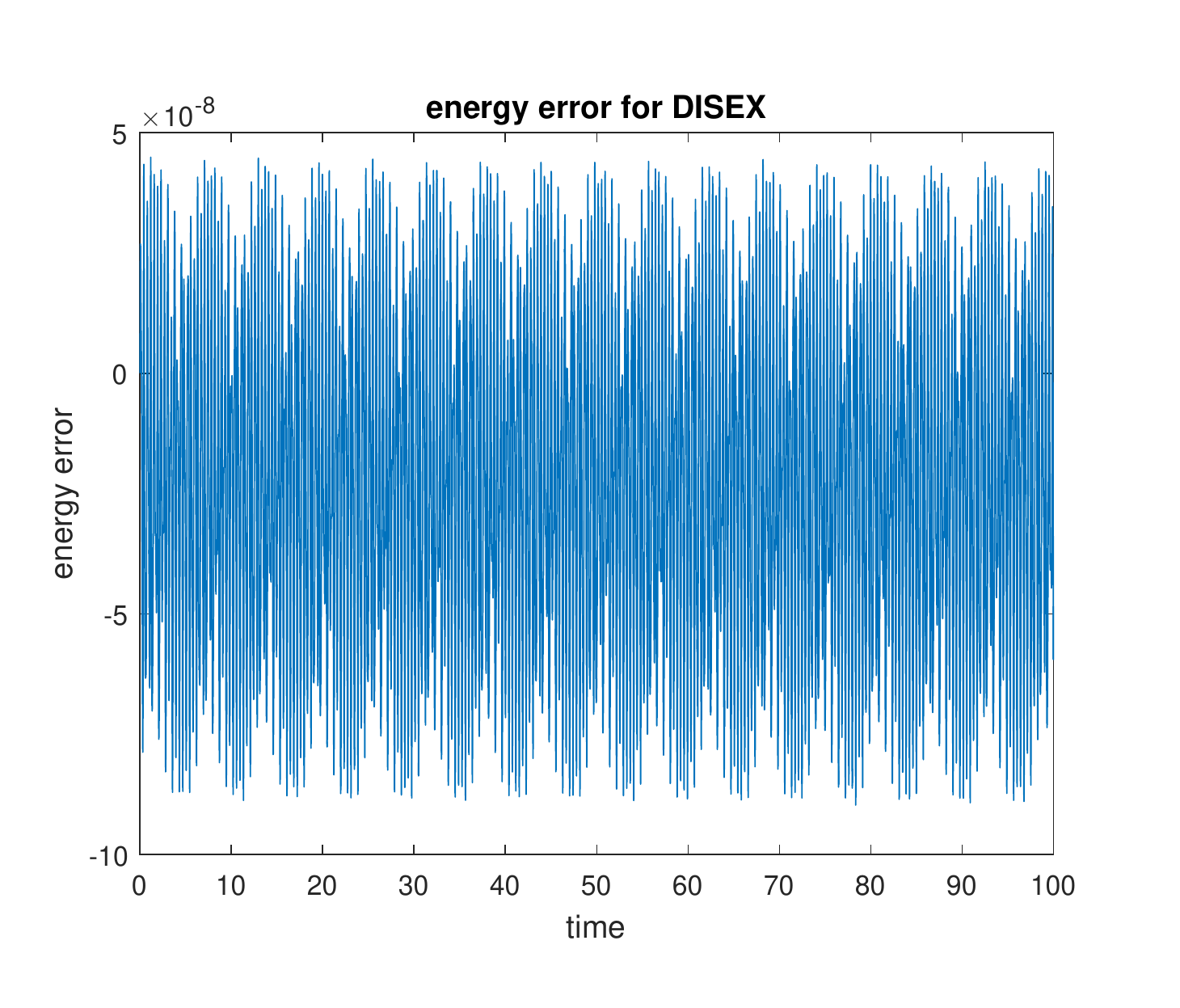}
		\caption{Energy error}
	\end{subfigure}
	\begin{subfigure}[b]{0.45\textwidth}
		\includegraphics[scale=0.4]{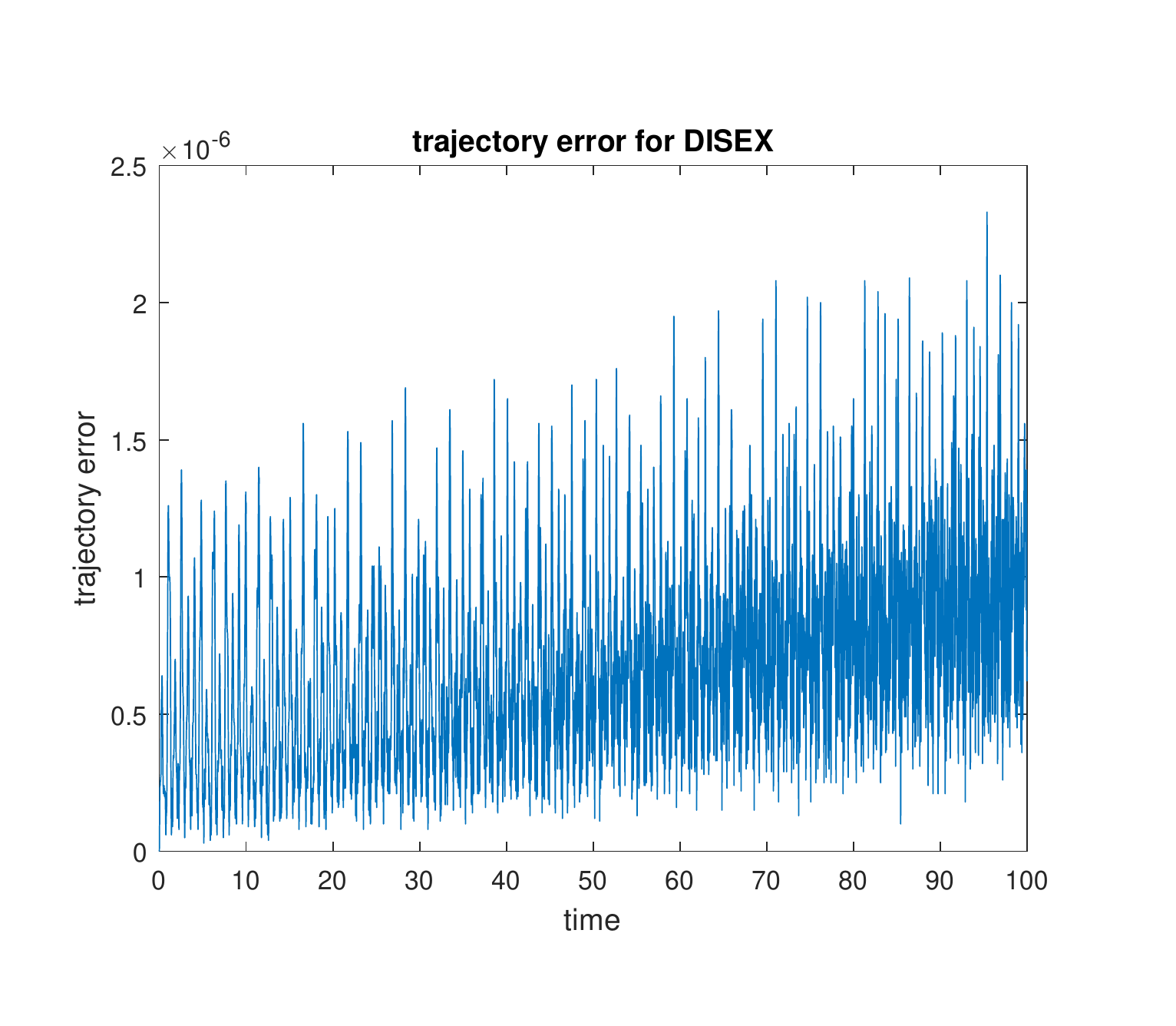}
		\caption{Trajectory error}
\end{subfigure}
\caption{Error plots for the 6 stage diagonally implicit symplectic exponential integrator (DISEX), $n = 161$, $timestep = 0.01$.\label{fig:disex}}
\end{figure}

\subsection{KdV} We simulate the KdV equation,
\[u_t + uu_x + \nu u_{xxx} = 0,\]
where $\nu = 5\times 5^{-4}$. As before, we explore how the maximum timestep for which the fixed point iteration converges depends on the choice of numerical integrator, and the spatial resolution of the semi-discretization. In Table~\ref{table:maximum_timestep_kdv}, $n$ is the number of nodes we use to discretize the spatial domain, the data in the table indicates the maximum timestep for which fixed point iterations converge. For the midpoint rule, the timestep decreases like $n^{-3}$ for fixed point iterations, and a comparable timestep is required for Newton iterations which is thereby too costly to implement. The classical discrete gradient method for the KdV equation is given by \eqref{dgforkdv}, and it exhibits the same timestep restrictions as the midpoint rule. In contrast, both the exponential midpoint and energy preserving exponential integrator allow rather large timesteps that are independent of the spatial resolution.

\begin{table}[h]
\begin{tabular}{|c|c|c|c|c|}
\hline $n$    & midpoint               & midpoint exp            &discrete gradient                       & energy exp \\
\hline 401  & $4\times 10^{-4}$   &    $8\times 10^{-4}$     &$4\times 10^{-4}$                         & 0.005 \\
\hline 601  & $1\times 10^{-4}$   &   $6\times 10^{-4}$      &$1\times 10^{-4}$                          & 0.005 \\
\hline 801  & $6\times 10^{-5}$   & $6\times 10^{-4}$       &  $5\times 10^{-5}$                        & 0.005 \\
\hline 1001 & $3\times 10^{-5}$   & $6\times 10^{-4}$        & $3\times 10^{-5}$                        & 0.005 \\
\hline 1201 & $1\times 10^{-5}$   & $6\times 10^{-4}$        &$1\times 10^{-5}$                         & 0.005 \\
\hline 1401 & $1\times 10^{-5}$   & $6\times 10^{-4}$        & $1\times 10^{-5}$                        & 0.005 \\
\hline
\end{tabular}
\caption{Maximum timestep for fixed point iteration to converge as a function of numerical integrator, and spatial resolution.}
\label{table:maximum_timestep_kdv}
\end{table}

In Figure~\ref{fig:exp_midpoint_kdv}, we observe that the exponential midpoint rule has an energy error that is small and bounded, as is typical for a symplectic integrator, and the trajectory error grows linearly. In Figure~\ref{fig:energy_exp_kdv}, the energy preserving exponential integrator has an energy that is preserved to within machine precision, and the trajectory error grows linearly.

\begin{figure}[H]
	\begin{subfigure}[b]{0.45\textwidth}
		\includegraphics[scale=0.4]{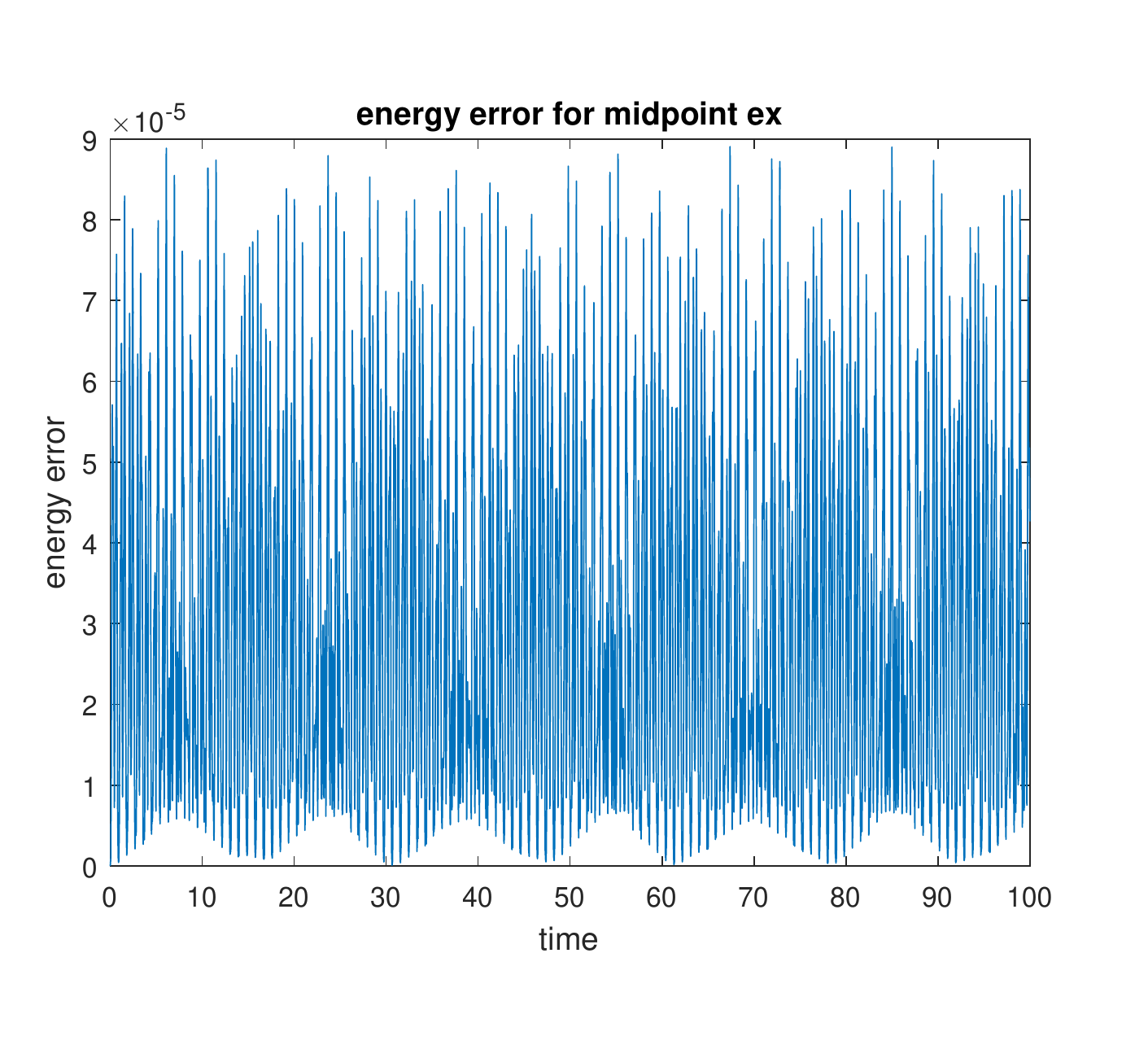}
		\caption{Energy error}
	\end{subfigure}
	\begin{subfigure}[b]{0.45\textwidth}
		\includegraphics[scale=0.4]{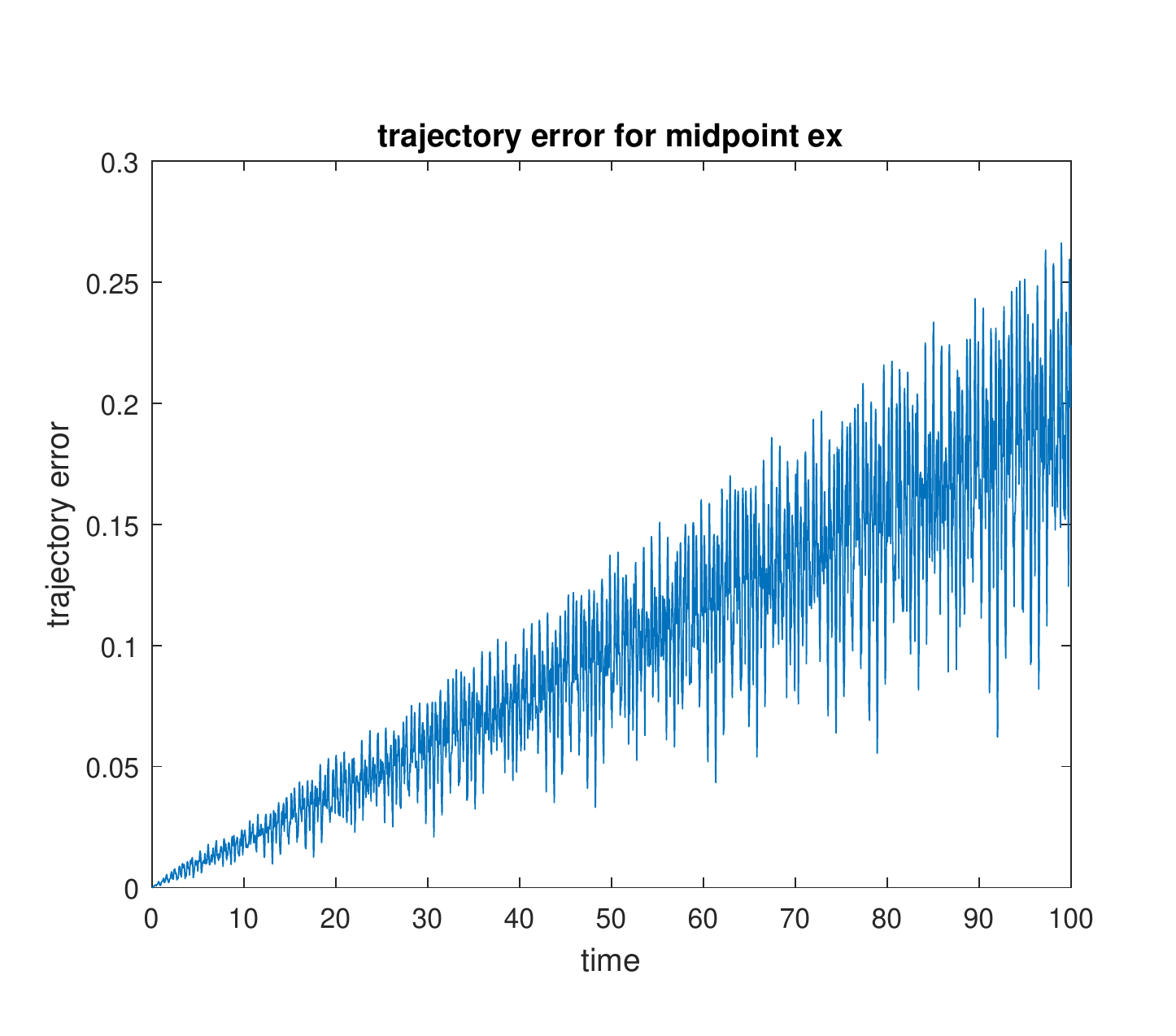}
		\caption{Trajectory error}
\end{subfigure}
\caption{Error plots for the exponential midpoint rule, $n = 401$, $timestep = 5\times 10^{-4}$.\label{fig:exp_midpoint_kdv}}
\end{figure}

\begin{figure}[H]
	\begin{subfigure}[b]{0.45\textwidth}
		\includegraphics[scale=0.4]{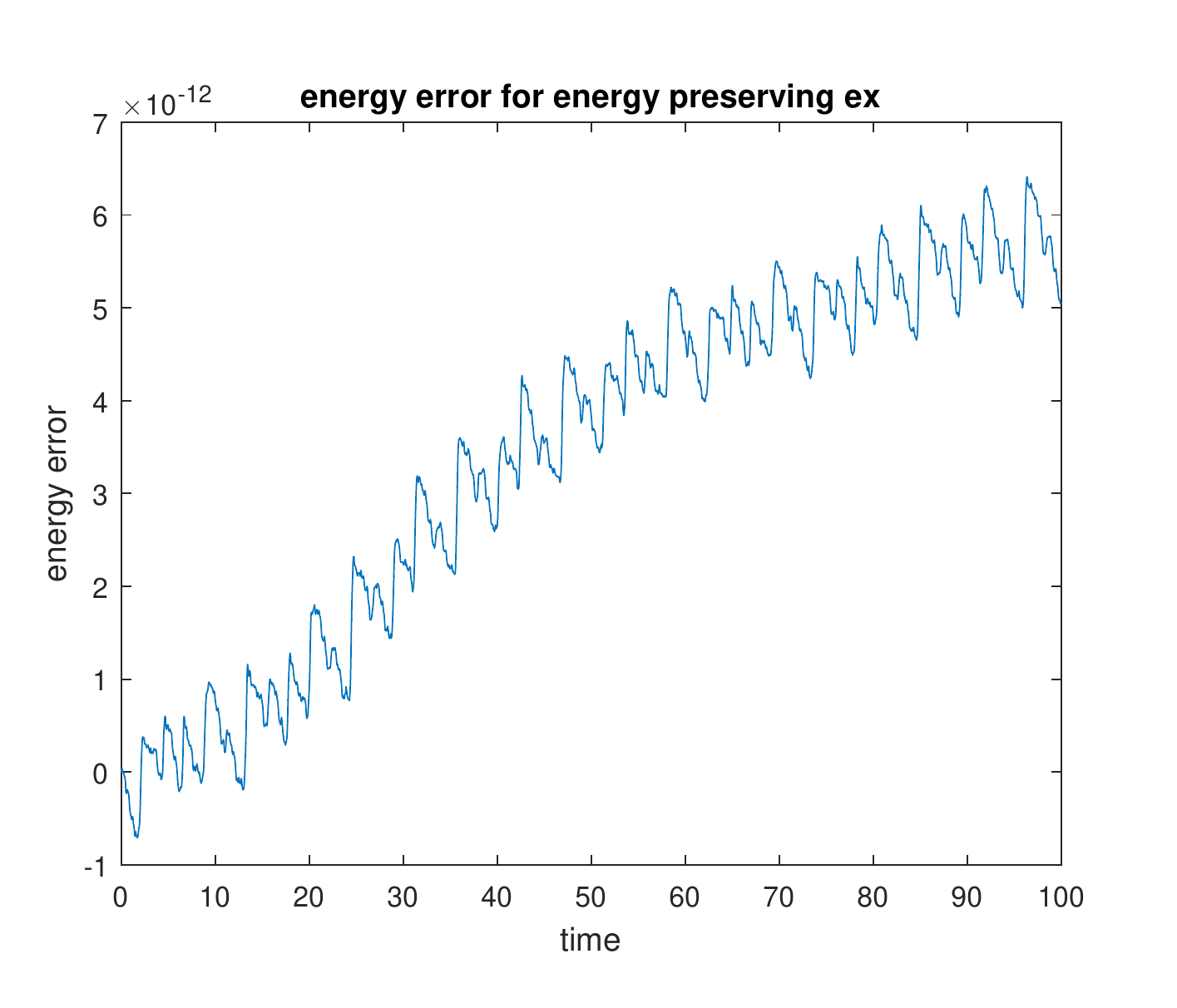}
		\caption{Energy error}
	\end{subfigure}
	\begin{subfigure}[b]{0.45\textwidth}
		\includegraphics[scale=0.4]{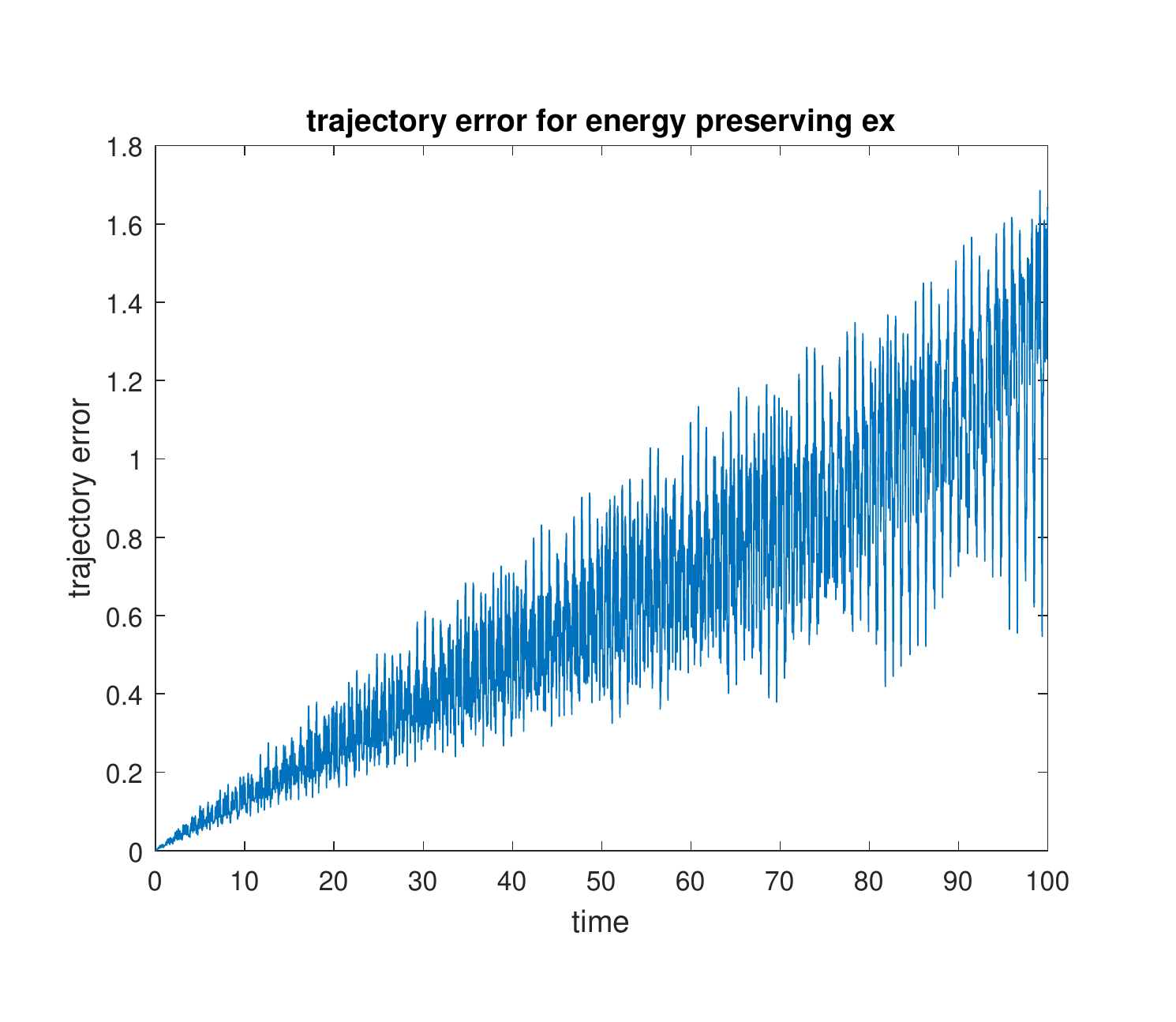}
		\caption{Trajectory error}
\end{subfigure}
\caption{Error plots for the energy preserving exponential integrator, $n = 401$, $timestep = 0.005$.\label{fig:energy_exp_kdv}}
\end{figure}

\section{Summary}
For semilinear Poisson system, we have developed two types of geometric exponential integrators, one that preserves the Poisson structure, and the other preserves the energy. They allow fixed point iteration methods to be used for significantly larger timesteps as compared to non-exponential integrators, such as the classical midpoint rule and the standard discrete gradient method, which require the use of Newton type iterations to converge with comparably large timesteps. This results in substantial computational savings, particularly when applied to the simulation of semi-discretized Hamiltonian PDEs. They also exhibit long time stability and conservation of the first integrals. We notice that in practice, energy preserving exponential integrators are more stable and allow for larger timesteps than symplectic exponential integrators. In future work, we will develop higher-order energy preserving exponential integrators.

\section*{Acknowledgements}
This research has been supported in part by NSF under grants DMS-1010687, CMMI-1029445, DMS-1065972, CMMI-1334759, DMS-1411792, DMS-1345013.

\clearpage
\bibliography{exponential}

\begin{thebibliography}{12}
\providecommand{\natexlab}[1]{#1}
\providecommand{\url}[1]{\texttt{#1}}
\expandafter\ifx\csname urlstyle\endcsname\relax
  \providecommand{\doi}[1]{doi: #1}\else
  \providecommand{\doi}{doi: \begingroup \urlstyle{rm}\Url}\fi

\bibitem[Atkinson and Han(2009)]{AtHa2009}
K.~Atkinson and W.~Han.
\newblock \emph{Theoretical numerical analysis}, volume~39 of \emph{Texts in
  Applied Mathematics}.
\newblock Springer, Dordrecht, third edition, 2009.
\newblock A functional analysis framework.

\bibitem[Hairer et~al.(2006)Hairer, Lubich, and Wanner]{HaLuWa2006}
E.~Hairer, C.~Lubich, and G.~Wanner.
\newblock \emph{Geometric numerical integration}, volume~31 of \emph{Springer
  Series in Computational Mathematics}.
\newblock Springer-Verlag, Berlin, second edition, 2006.
\newblock Structure-preserving algorithms for ordinary differential equations.

\bibitem[Hochbruck and Ostermann(2005)]{HoOS2005}
M.~Hochbruck and A.~Ostermann.
\newblock Exponential {R}unge-{K}utta methods for parabolic problems.
\newblock \emph{Appl. Numer. Math.}, 53\penalty0 (2-4):\penalty0 323--339,
  2005.

\bibitem[Hochbruck and Ostermann(2010)]{HoOs2010}
M.~Hochbruck and A.~Ostermann.
\newblock Exponential integrators.
\newblock \emph{Acta Numer.}, 19:\penalty0 209--286, 2010.

\bibitem[Leimkuhler and Reich(2004)]{LeRe2004}
B.~Leimkuhler and S.~Reich.
\newblock \emph{Simulating {H}amiltonian dynamics}, volume~14 of
  \emph{Cambridge Monographs on Applied and Computational Mathematics}.
\newblock Cambridge University Press, Cambridge, 2004.

\bibitem[Marsden and West(2001)]{MaWe2001}
{J. E.} Marsden and M.~West.
\newblock Discrete mechanics and variational integrators.
\newblock \emph{Acta Numer.}, 10:\penalty0 357--514, 2001.

\bibitem[Minchev(2004)]{Mi2004}
B.~Minchev.
\newblock \emph{Exponential integrators for semilinear problems}.
\newblock PhD thesis, University of Bergen, 2004.

\bibitem[Quispel and Turner(1996)]{QiTu1996}
{G. R. W.} Quispel and {G. S.} Turner.
\newblock Discrete gradient methods for solving {ODE}s numerically while
  preserving a first integral.
\newblock \emph{J. Phys. A}, 29\penalty0 (13):\penalty0 L341--L349, 1996.

\bibitem[Simo and Gonzalez(1993)]{SiGo1993}
{J. C.} Simo and O.~Gonzalez.
\newblock Assessment of energy-momentum and symplectic schemes for stiff
  dynamical systems.
\newblock \emph{Proc. ASME Winter Annual Meeting, New Orleans, Louisiana.},
  1993.

\bibitem[Trefethen(2000)]{Tr2000}
{L. N.} Trefethen.
\newblock \emph{Spectral methods in {MATLAB}}, volume~10 of \emph{Software,
  Environments, and Tools}.
\newblock Society for Industrial and Applied Mathematics (SIAM), Philadelphia,
  PA, 2000.

\bibitem[Wu et~al.(2015)Wu, Liu, and Shi]{WuLiSh2015}
X.~Wu, K.~Liu, and W.~Shi.
\newblock An extended discrete gradient formula for multi-frequency oscillatory
  {H}amiltonian systems.
\newblock In \emph{Structure-Preserving Algorithms for Oscillatory Differential
  Equations II}, pages 95--115. Springer, 2015.

\bibitem[Zhu and Qin(1994)]{ZhQi1994}
{W. J.} Zhu and {M. Z.} Qin.
\newblock Poisson schemes for {H}amiltonian systems on {P}oisson manifolds.
\newblock \emph{Comput. Math. Appl.}, 27\penalty0 (12):\penalty0 7--16, 1994.

\end{thebibliography}
\bibliographystyle{plainnat}
\end{document}